\newcommand{\real}{\mathbb{R}}
\newcommand{\RR}{\mathbb{R}}
\newcommand{\symMat}{\mathbb{S}}
\mathchardef\mhyphen="2D % Define a "math hyphen"
\DeclareMathOperator*{\argmin}{\arg\!\min}
\DeclarePairedDelimiter{\norm}{\lVert}{\rVert}
\DeclarePairedDelimiter{\abs}{\lvert}{\rvert}
\DeclareMathOperator{\rank}{rank}
\DeclareMathOperator{\nullspace}{\mathbf{nullspace}}
\DeclareMathOperator{\diag}{\mathbf{diag}}
\newtheorem{theorem}{Theorem}
\numberwithin{theorem}{section}
\newtheorem{lemma}[theorem]{Lemma}
\titleformat{\subsubsection}[runin]{\normalfont\bfseries}{\thesubsubsection.}{10pt}{}
\newcommand{\xsol}{X_\star}
\newcommand{\ysol}{y_\star}
\newcommand{\zsol}{Z_\star}
\newcommand{\rsol}{r_\star}
\newcommand{\pval}{p_\star}
\newcommand{\dval}{d_\star}
\newcommand{\Amap}{\mathcal{A}}
\newcommand{\Ajmap}{\Amap^*}
\newcommand{\xsolset}{\mathcal{X}_\star}
\newcommand{\ysolset}{\mathcal{Y}_\star}
\newcommand{\dm}{n}
\newcommand{\ncons}{m}
\newcommand{\inprod}[2]{\langle #1, #2 \rangle}
\newcommand{\twonorm}[1]{\left\|#1\right\|}
\newcommand{\fronorm}[1]{\left\|#1\right\|_{\mbox{\tiny{F}}}}
\newcommand{\opnorm}[1]{\left\|#1\right\|_{\mbox{\tiny{\textup{op}}}}}
\newcommand{\nucnorm}[1]{\left\|#1\right\|_*}
\DeclareMathOperator{\dist}{dist}
\DeclareMathOperator{\disttwonorm}{dist}
\newcommand{\tr}{\mathop{\bf tr}}
\newcommand{\rpast}{r_{\text{p}}} 
\newcommand{\rcurrent}{r_{\text{c}}}
\newcommand{\bigO}{\mathcal{O}}
\newcommand{\shortTimes}{{\mkern-1mu\times\mkern-1mu}}
\newcommand{\faceplus}[1]{\mathcal{C}_{#1}^{+}}%
\newcommand{\myparagraph}[1]{\textbf{#1}.}
\let\originalleft\left
\let\originalright\right
\renewcommand{\left}{\mathopen{}\mathclose\bgroup\originalleft}
\renewcommand{\right}{\aftergroup\egroup\originalright}
\newcommand{\ben}[1]{{\color{red} (Ben: #1)}}
\renewcommand{\ben}[1]{}
\newcommand{\trux}{X^\natural}
\newcommand{\ones}{\mathbf{1}}
\newcommand{\newcontent}[1]{{\color{black}#1}}
\begin{document}
\title{Revisiting Spectral Bundle Methods: \\
	Primal-dual (Sub)linear Convergence Rates 
	\thanks{\textbf{Funding:} L. Ding was supported by the National Science Foundation CRII 
	award 1657420, grant 1704828, and CCF-2023166. B. Grimmer was supported by the National Science Foundation Graduate Research Fellowship under Grant No. DGE-1650441.}

\author{Lijun Ding\thanks{Wisconsin Institute for Discovery, University of Wisconsin--Madison, Madison, WI, 53705; \texttt{https://lijunding.net}
		} 
	\and Benjamin Grimmer\thanks{Department of Applied Mathematics \& Statistics, Johns Hopkins University, Baltimore, MD 21218;		\texttt{https://www.ams.jhu.edu/~grimmer/}
	}
}
}
	\date{}
	\maketitle
	\begin{abstract}
		The spectral bundle method proposed by Helmberg and Rendl~\cite{helmberg2000spectral} is well established for solving large-scale semidefinite programs (SDP) thanks to its low per iteration computational complexity and strong practical performance.
		In this paper, we revisit this classic method showing it achieves sublinear convergence \emph{rates} in terms of both primal and dual SDPs under merely strong duality, complementing previous guarantees on primal-dual convergence.
		Moreover, we show the method speeds up to linear convergence if (1) structurally, the SDP admits strict complementarity, and (2) algorithmically, the bundle method captures the rank of the optimal solutions. Such complementary and low rank structure is prevalent in many modern and classical applications. The linear convergence result is established via an eigenvalue approximation lemma which might be of independent interest. Numerically, we confirm our theoretical findings that the spectral bundle method, for modern and classical applications, speeds up under these conditions. Finally, we show that the spectral bundle method combined with a recent matrix sketching technique is able to solve an SDP with billions of decision variables in a matter of minutes.
	\end{abstract}

	%	\newpage
	%	\setcounter{tocdepth}{2}
	%	\tableofcontents
	%	\newpage
	\section{Introduction}
We consider the problem of solving semidefinite programs (SDPs) of the form
\begin{equation}\label{p} \tag{P}
\begin{aligned}
& \underset{X\in\symMat^{\dm}\subset \RR^{\dm \times \dm}}{\text{maximize}}
& & \langle -C, X \rangle \\
& \text{subject to}
& & \Amap X = b \\
&&& X \succeq 0,
\end{aligned}
\end{equation}
where the decision variable $X \in \symMat^{\dm}\subset \RR^{\dm \times \dm}$ is a symmetric matrix and $n$ may be large (numerically, we consider up to $n\approx 160,000$, resulting in billions of entries in $X$), and the problem data is comprised of a symmetric cost matrix $C\in \symMat^{\dm}$, a linear map $\Amap: \symMat^{\dm} \rightarrow \RR^{\ncons}$, and a right hand side vector $b \in \RR^{\dm}$.
The task of solving \eqref{p} can often be equivalently approached via its dual problem, optimizing over $y\in\RR^{\dm}$,
\begin{equation}\label{d} \tag{D}
\begin{aligned} 
& \underset{y\in\RR^{\ncons }}{\text{minimize}}
& & \langle -b, y \rangle \\
& \text{subject to}
& & \Amap^*y \preceq C
\end{aligned}
\end{equation}
where $\Amap^*$ denotes the adjoint map of $\Amap$. We denote the solution sets of \eqref{p} and \eqref{d} as $\xsolset$ and $\ysolset$ respectively.

Semidefinite programming occurs at the heart of many important large-scale problems (for example, 
matrix completion~\cite{candes2009exact}, max-cut~\cite{goemans1995improved}, community detection~\cite{bandeira2018random}, and phase 
retrieval~\cite{candes2013phaselift}). A huge branch of literature has been devoted to the problem of efficiently solving SDPs like~\eqref{p} \cite{todd2001semidefinite,nesterov1989self,nesterov1994interior,alizadeh1995interior,burer2003nonlinear,glowinski1975approximation,helmberg2000spectral,boyd2011distributed,friedlander2016low,friedlander2016low,yurtsever2019conditional,renegar2014efficient,ding2019optimal}. We refer the reader 
to \cite{monteiro2003first}, \cite[Section 2]{ding2019optimal}, and \cite[Section 3 and 4]{majumdar2019survey}, surveying this myriad of methods. 

Among these methods, spectral bundle methods, proposed by Helmberg and Rendl~\cite{helmberg2000spectral}, stand out due to their low per iteration complexity and fast practical convergence. These two properties are critical to effectively tackling large-scale SDPs (as a high iteration cost may make computing even a single iteration prohibitively slow). In this work, we derive convergence guarantees for a family of spectral bundle methods and identify further computational benefits in both convergence rates and per iteration costs whenever the optimal solutions possess certain low-rank structures, prevalent in many modern applications.

Instead of solving either \eqref{p} or \eqref{d} directly, Helmberg and Rendl's spectral method considers the following equivalent penalization dual problem:
for any sufficiently large $\alpha$, e.g., larger than the trace of any maximizer of \eqref{p} \cite[Lemma 6.1]{ding2019optimal}\footnote{The lemma in \cite{ding2019optimal} as written
	requires the primal solution to be unique, but applies equally when there are multiple solutions, replacing the condition $\alpha > \tr(\xsol)$ by $\alpha >\sup_{\xsol \in \xsolset}\tr(\xsol)$.}, \eqref{d} is equivalent to (in the sense of having the same optimal value and solution set)\footnote{Actually, the 
	spectral bundle method of Helmberg and Rendl requires the trace of every feasible $X$ for \eqref{p} to be the same and 
	deals with the eigenvalues instead of the maximum of the eigenvalues and zero in \eqref{eq: penaltySDP}.
	However, the method extends directly to the general setting without fixed trace.}
\begin{equation}\tag{\texttt{pen}-D}
\begin{aligned}\label{eq: penaltySDP}
& \underset{y\in\RR^{\ncons}}{\text{minimize}}
& & F(y) :\,= \langle -b, y \rangle +\alpha \max\{\lambda_{\max}(\Ajmap y-C), 0\}.
\end{aligned}
\end{equation}
In Section~\ref{sec:def}, we formally define bundle methods and the considered family of spectral variants for solving SDPs. The main idea behind these methods is to approximate the nonnegative eigenvalue function $\alpha \max\{\lambda_{\max}(\Ajmap y-C), 0\}$ by a maximum of lower bounds indexed by a small SDP representable set.
Roughly speaking, the considered family of spectral methods maintain this approximation using $\rpast$ past eigenvectors and $\rcurrent$ current eigenvectors of the matrix $\Ajmap(y)-C$ evaluated at the past and current iterates respectively. We denote a method from this family as $(\rpast,\rcurrent)$-SpecBM. There is a rich history of studying methods of this form, which we provide detailed connections to in Appendix \ref{sec: historicalRemark}.

Importantly, the use of a small SDP representable set makes the problem of minimizing this eigenvalue approximation tractable. This can result in per iteration complexities much lower than that of many ADMM type methods \cite{boyd2011distributed} or the second-order bundle method in \cite{oustry2000second}, which both require full eigenvalue decomposition of an 
$n\times n$ matrix, requiring $\bigO(\dm^3)$ operations in general. Section~\ref{sec: ImportantsubproblemSolver} discusses the iteration cost and computational advantages of this approach (which can rely on as few as one eigenvector computation per iteration, $\rcurrent=1$). 

Spectral bundle methods for solving SDPs have received considerable attention since being first proposed in~\cite{helmberg2000spectral} and have been considered in many extended settings by the algorithmic variants of~\cite{helmberg2002spectral,apkarian2008trust,helmberg2014spectral}. 
Despite the success of these methods, past convergence theory, e.g., \cite[Lemma 5]{helmberg2000spectral}
%and \cite[Theorem7]{oustry2000second}
\footnote{We note that \cite[Lemma 5]{helmberg2000spectral} only shows the dual objective converges and did not address the primal convergence. An analysis of primal convergence is given in \cite[Theorem 15.6]{helmberg2004cutting}. Our results in Section \ref{sec: analysis} can also be used to conclude primal convergence from dual convergence.}, mainly focuses on whether the iterates converge, rather than their convergence rates. This work's analysis of spectral methods aims to explain and predict empirical performance and quantify the tradeoffs related to these methods' low iteration costs.

\myparagraph{Our contributions}
In this work, we establish convergence guarantees for a broad family of spectral bundle methods, $(\rpast,\rcurrent)$-SpecBM, and show that these convergence rates speed up substantially under appropriate structural conditions, matching observed performance.
\begin{itemize}
	\item \textbf{Sublinear Spectral Bundle Method convergence rates}: In Theorem 
	\ref{thm: sublinearates}, we show that any configuration of $(\rpast,\rcurrent)$-SpecBM admits a $\bigO(1/\epsilon^3)$
	convergence rate in terms of the dual objective and $\bigO(1/\epsilon^6)$ in terms of the primal merely assuming strong duality holds. 
	Additionally, under strict complementarity (formalized in Section \ref{sec: analtical conditon}), dual and primal convergence
	speeds up to $\bigO(1/\epsilon)$ and $\bigO(1/\epsilon^2)$ respectively. 
	
	\item \textbf{Linear convergence under low-rankness}: In Theorem \ref{thm: linear convergence of Block SBM under the extra condition strict complementarity}, we further show \emph{linear convergence} if (1) strict 
	complementarity and dual uniqueness hold and (2) the number of eigenvectors computed each iteration $\rcurrent$ exceeds the largest of any primal optimal solution's rank. \newcontent{This fast convergence result is based on a novel eigenvalue approximation Lemma~\ref{lem: importantLemmaQuadraticAccurateModel} showing that when the optimal solution is low rank, the bundle method's model objective becomes quadratically accurate.}

	\item \textbf{Scalability and storage reductions under low-rankness}:
	%\myparagraph{Remark on storage optimality} 
	Finally, we show that spectral bundle methods can scale up to tackle large-scale SDPs whenever solutions possess the appropriate low-rank structure. This is accomplished in part by incorporating the matrix sketching ideas of~\cite{tropp2017practical,yurtsever2017sketchy}. Whenever the primal optimal solutions are low-rank, this tool enables the spectral bundle method to be applied without ever storing a matrix $X$ with $\dm^2$ entries, attaining the notion of storage optimality discussed in~\cite[Section 1.2]{ding2019optimal}. Section~\ref{sec: numerics} demonstrates these scalability gains following from configuring the spectral method based on our linear convergence theory and utilizing the improved time and space complexity 
induced  by sketching.
%An earlier attempt 
%by the authors \cite{ding2019bundle} has explored this idea. 
%Since storage is not a central topic of the paper, we defer the sketching procedure to  %in the appendix. 
\end{itemize} 
%\lnote{Perhaps give a discussion on the second contribution. Upgrade the storage optimality to a bullet point? OR after a discussion on the conditions}
\subsection{Low Rankness and Algorithm Performance} \label{sec: discussion On low-rankness}
A ubiquitous structure among applications of semidefinite programming is that the solutions of \eqref{p} are \emph{low rank}. For many applications, an explicit upper bound on this rank is available from domain knowledge: 
\begin{itemize}
    \item \textbf{Recommendation systems and matrix completion:} The user-item rating matrix underlying many recommendation systems is usually incomplete and requires filling-in missing entries. This problem is also known as matrix completion. It has been observed in~\cite[figure 5]{chen2018harnessing} and~\cite[figure 3]{fan2019factor} that for different movie-lens datasets~\cite{harper2015movielens}, the underlying complete matrix has rank no more than 30 even though there are thousands of users and items. 
    \item \textbf{Sensor networks and Euclidean distance matrix completion:} In sensor networks, usually only a few pairs' distances are known or measured while the distance matrix for all the pairs is desired. This is also known as the Euclidean distance matrix completion problem. When distances are actually measured in our three-dimensional world, the resulting distance matrix must have rank at most three~\cite{alfakih1999solving,so2007theory}. 
    \item \textbf{Community detection and $\mathbb{Z}_2$ synchronization:} The problem of community detection aims to identify clusters in a graph where nodes within the same community are more likely to have an edge. The SDP formulation of this problem~\cite{guedon2016community,li2021convex} has a solution with rank no more than the number of clusters. In an idealized two cluster problem and its continuous version~\cite{abbe2014decoding,li2021convex} ($\mathbb{Z}_2$ synchronization), the optimal SDP solution has rank one. Both of these models play an important role in understanding the theoretical limit of computational methods~\cite{bandeira2017tightness,abbe2014decoding}.
\end{itemize}
Other examples of the prevalence of low-rank optimal solutions include Max-Cut, which has solution rank no more than 30 for various datasets \cite[Table 1]{ding2020regularity}, and phase retrieval \cite{candes2015phase}, which always has a unique rank one solution.

The presence of low-rank solutions is critical to enabling our linear convergence guarantees and improvements in iteration cost (in both time and space complexity) for spectral bundle methods. Our linear convergence results require the parameter $r_c$ to be greater or equal to the dimension of the null space of every dual optimal solution's slack matrix. Under strict complementarity and primal-dual uniqueness, this condition is equivalent to $r_c\geq \rank(\xsol)$, i.e, the parameter $r_c$ is larger than or equal to the primal solution rank. Both conditions are satisfied for many applications as verified in \cite{ding2020regularity}. Moreover, we observe in our numerics that even if the dual uniqueness condition fails (which indeed occurs for matrix completion~\cite{ding2020regularity}) as long as strict complementarity holds, we only need $r_c \geq \rank(\xsol)$. 

A larger choice of $r_c$ increases the per iteration computational complexity of the spectral bundle method (discussed in Section \ref{sec: ImportantsubproblemSolver}). Hence selecting $r_c$ near $\rank(\xsol)$ maintains fast linear convergence while notably reducing the method's per iteration time complexity for applications with low-rank solutions. Similarly, we discuss storage reduction techniques in Section \ref{subsec:sketching} utilizing an estimate upper bounding the optimal solution rank. As a result, the amount of needed memory can scale linearly with this estimate, establishing improved space complexity for low-rank applications as well.

\subsection{Paper Organization and Notation} Section \ref{sec:def} formally introduces bundle methods (based on proximal regularization and aggregation) and the family of spectral bundle methods considered, $(\rpast,\rcurrent)$-SpecBM. Then Section \ref{sec: analysis} presents our main convergence theory. Section \ref{sec: numerics} numerically demonstrates convergence speed-ups whenever the parameter $\rcurrent$ is chosen larger than the optimal solution rank, matching our theory, and shows how matrix sketching ideas can be applied to notably scale up this approach (to problems with billions of entries in $X$).

\myparagraph{Notation} We denote members of the optimal solution sets by $\xsol \in \xsolset$ and $\ysol \in \ysolset$. We equip 
$\symMat^{\dm}$ and $\real^{\ncons}$ with the trace inner product and the dot product respectively, and denoted both as $\inprod{\cdot}{\cdot}$. The induced norms are both denoted as $\norm{\cdot}$. For a symmetric matrix $A\in \symMat^\dm$, we denote its eigenvalues as 
$\lambda_{\max}(A)=\lambda_{1}(A)\geq \dots \geq \lambda_{\dm}(A)$ with a corresponding set of orthonormal eigenvectors $v_1,v_2,\dots,v_n$. The notation $\mathbb{S}_+^n \subset \mathbb{S}^n$ denotes the set of $n\times n$ symmetric positive semidefinite matrices.
The matrix operator two norm, Frobenius norm, and nuclear norm are denoted as $\opnorm{\cdot}$, $\fronorm{\cdot}$, and $\nucnorm{\cdot}$ respectively. We denote the maximum nuclear norm of the primal solution set by $D_{\xsolset} =\sup_{\xsol\in\xsolset} \nucnorm{\xsol}$
and of a penalized dual level set by $D_{y_0} = \sup_{F(y)\leq F(y_0)} \norm{y}.$
The dual slack matrix for each $y\in \RR^{\ncons}$ is defined as $Z(y)\mbox{:=} C-\Amap^*y$. 
The operator norm of $\Amap^*$ is defined as $\opnorm{\Amap^*}=\max_{y\in \RR^{\ncons},\twonorm{y}\leq 1}\fronorm{\Amap^*y}$. For a closed set $\mathcal{X}\subset \RR^{\ncons}$ and a point $z\in \RR^{\ncons}$, we define the distance of $z$ to  $\mathcal{X}\subset \RR^{\ncons}$ as $\dist(z,\mathcal{X})=\inf_{x\in \mathcal{X}}\norm{x-z}$. %We use subscript $t$ to denote the $t$-th iteration of some iterates, e.g., $y_t$ is the $t$-th dual iterate of our spectral bundle method.

	\section{Preliminaries and Spectral Bundle Methods} \label{sec:def}

In this section, we first review two standard conditions (strong duality and strict complementarity) of well-behaved semidefinite programs. Then Section~\ref{subsec:bundle-methods} introduces the framework for proximal bundle methods and Section~\ref{subsec:spectral-bundle} specializes this to define spectral bundle methods by utilizing carefully constructed eigenvalue approximations (based on a bundle of past and current eigenvectors). Finally, in Section~\ref{subsec:computation}, we discuss the needed computations and per iteration costs to implement such a spectral bundle method.

Throughout, we assume that the pair of semidefinite programming problems~\eqref{p} and~\eqref{d} satisfy \emph{strong duality}: namely that the solution sets $\xsolset$ and $\ysolset$ are nonempty, 
compact and each pair $(\xsol,\ysol)\in \xsolset\times \ysolset$ has zero duality 
gap
\[
\pval := \inprod{-C}{\xsol} = \inprod{-b}{\ysol}=:\dval.
\]
Note that we require $\xsolset$ and $\ysolset$ to be nonempty and compact instead of just  $\pval =\dval$. This
condition holds whenever Slater's conditions are satisfied by both \eqref{p} and \eqref{d} and the map $\Amap$ is surjective.

Following~\cite[Definition 4]{alizadeh1997complementarity}, we say a pair $(\xsol,\ysol)$ with dual slack matrix $\zsol(\ysol) = C-\Ajmap(\ysol)$ satisfies \emph{strict complementarity} if 
\[
\rank(\xsol)+\rank(\zsol)=\dm. 
\]
Whenever such a pair exists, we say~\eqref{p} and~\eqref{d} satisfy 
\emph{strict complementarity}. This condition is satisfied by generic SDPs~\cite{alizadeh1997complementarity} as well as by many well structured SDPs~\cite{ding2020regularity}.

\subsection{Proximal Bundle Methods}\label{subsec:bundle-methods}
A bundle method for solving a generic minimization problem $\min_{y\in\RR^{ \ncons}} f(y)$ constructs an approximation of the objective $\bar f_t$ at each iteration $t$, typically based on (sub)gradient evaluations of $f$ at a sequence of points $z_t$ (utilizing both the past and current iterates). We denote the set of subgradients of a convex function $f$ by 
$\partial f(y) = \{g\in\RR^\ncons \mid f(y') \geq f(y)+\langle g, y'-y\rangle  \text{ for all } y' \}$, referred to as the subdifferential of $f$ at $y$.

Each iteration of a proximal bundle method computes the following proximal step minimizing this model of the true objective
\begin{equation}\label{eq:prox-step}
 z_{t+1} \in \argmin \bar f_t(y) + \frac{\rho}{2}\|y-y_t\|^2
\end{equation}
where $y_t \in \real^{m}$ is the current reference point (proximal center) and $\rho>0$. The point $z_{t+1}\in \real^{m}$ serves two purposes: (i) it is used to construct the next model objective function $\bar f_{t+1}$ and (ii) if $z_{t+1}$ offers sufficient descent, defined for some fixed $\beta\in(0,1)$ as
$ f(z_{t+1}) \leq f(y_t) - \beta\left(f(y_t) - \bar f_t(z_{t+1})\right)$,
then the next iteration takes $y_{t+1}=z_{t+1}$ (called a descent step), otherwise the proximal center is not changed $y_{t+1}=y_t$ (called a null step). This process is formalizes in Algorithm~\ref{alg:bundle}.

For the sake of simplifying our development and to take advantage of existing convergence theory for proximal bundle methods, we will assume that this model $\bar f_t$ is constructed satisfying the following three properties: $\bar f_{t+1}$ is a lower bound on the true objective $f$
\begin{equation} \label{eq:cond1}
    \bar f_{t+1}(y) \leq f(y) \qquad \text{ for all }y\in\RR^{ \ncons}\ ,
\end{equation}
$\bar f_{t+1}$ is lower bounded by the linearization given by some subgradient $g_{t+1}\in \partial f(z_{t+1})$ computed after each~\eqref{eq:prox-step}
\begin{equation} \label{eq:cond2}
   \bar f_{t+1}(y) \geq f(z_{t+1}) + \langle g_{t+1}, y - z_{t+1}\rangle  \qquad \text{ for all }y\in\RR^{ \ncons}\ , 
\end{equation}
$\bar f_{t+1}$ is lower bounded by the linearization given by the subgradient $s_{k+1}:= \rho(z_{t+1}-y_t)\in \partial \bar f_{t+1}(z_{t+1})$ (i.e., the subgradient certifying that $z_{t+1}$ minimizes~\eqref{eq:prox-step})
\begin{equation} \label{eq:cond3}
   \bar f_{t+1}(y) \geq \bar f_t(z_{t+1}) + \langle s_{t+1}, y - z_{t+1}\rangle  \qquad \text{ for all }y\in\RR^{ \ncons}\ , 
\end{equation}

A bundle method with a full memory may construct $\bar f_t$ as the maximum of all $f(z_{\tau}) + \langle g_{\tau}, \cdot - z_{\tau}\rangle$ with $\tau\leq t$. Alternatively, a bundle method with cut aggregation may utilize a much simpler model given by the maximum of the two required lower bounds~\eqref{eq:cond2} and~\eqref{eq:cond3}, where the second bound serves as an aggregation of all the previous subgradient bounds. Spectral bundle methods construct a more specialized model approximating the eigenvalue function in~\eqref{eq: penaltySDP}.
\begin{algorithm2e}[t]
  \SetAlgoNoLine
	\KwData{$z_0=y_0 \in \RR^n$, $\bar f_0=f(y_0)+\langle g_0, \cdot -y_0\rangle$}
	\Step{}{
	Compute candidate iterate $ \displaystyle z_{t+1} \leftarrow \argmin_{z \in \RR^{d}} \bar f_t(z) + \frac{\rho}{2}\|z-y_t\|^2$\;
	\If(\tcp*[f]{Descent step}){$\beta(f(y_t) - \bar f_t(z_{t+1})) \leq  f(y_t)-f(z_{t+1})$}{
	  Set $y_{t+1} \leftarrow z_{t+1}$\;
	}
	\uElse(\tcp*[f]{Null step}){
	  Set $y_{t+1} \leftarrow y_{t}$\;}
	  Compute $\bar f_{t+1}$ without violating~\eqref{eq:cond1}, \eqref{eq:cond2}, or \eqref{eq:cond3}\tcp*[r]{Update Model}
	}
	\caption{Proximal Bundle Method}
	\label{alg:bundle}
\end{algorithm2e}

\subsubsection{Proximal Bundle Method Convergence Guarantees}\label{subsec:bundle-rates}
Under any method of constructing models $\bar f_{t+1}$ satisfying these conditions, \eqref{eq:cond1}-\eqref{eq:cond3}, the proximal bundle method is known to converge to a minimizer for any closed convex objective that attains its minimum value. Here we briefly review the existing guarantees on this method's rates of convergence. In our analysis, we will utilize these results as a blackbox to bound the penalized dual formulation's objective gap $F(y_t)-F(\ysol)\leq \epsilon$ when specialized to spectral bundle methods.

In particular, we are interested in guarantees on the sequence of proximal centers $y_t$, which by definition have non-increasing function value (only changing at descent steps). \newcontent{Moreover, Algorithm~\ref{alg:bundle} has bounded $M=\sup_{t\geq 0}\{\|g_t\|\}<\infty$ and $D=\sup_{t\geq 0}\{\dist(y_t, \ysolset)\}<\infty$ since both sequences of iterates, $z_t$ and $y_t$, produced by the bundle method are well known to converge whenever $\ysolset\neq\emptyset$~\cite[(7.64)]{ruszczynski2006nonlinear}.}

Several previous works have bounded the total number of steps (descent and null) required to reach a target optimality gap $\epsilon>0$. The earliest such guarantee for the proximal bundle method was given by Kiwiel~\cite{kiwiel2000efficiency}, showing that after $\bigO(1/\epsilon^3)$ total steps (descent and null), the method has $f(y_t)-f(\ysol)\leq \epsilon$. More recently, Du and Ruszczynski~\cite{Du2017} showed under a quadratic growth bound (like that given by Lemma~\ref{lem: qg}), this convergence rate improves to $\bigO(\log(1/\epsilon)/\epsilon)$.
Recently, Diaz and Grimmer~\cite{DG2020} derived slightly more general versions of these $\bigO(1/\epsilon^3)$ and $\bigO(1/\epsilon)$ bounds (improving the latter by a log factor). \newcontent{Following Theorems 2.1 and 2.3 of~\cite{DG2020}, the following convergence guarantees hold.
\begin{theorem}\label{thm:prox-bm-convergence}
    For any convex $f$ with nonempty set of minimizers, the iterates $y_t$ of Algorithm~\ref{alg:bundle} have $f(y_t)-\inf f\leq \epsilon$ for all
    $$ t\geq \bigO\left(\frac{\rho M^2D^4}{\beta(1-\beta)^2\epsilon^3}\right) \ . $$
    Additionally, if some $\mu>0$ has $f(y) - \inf f\geq \mu \dist(y,\ysolset)^2$ for all $y$, this bound improves to
    $$ t\geq \bigO\left(\frac{M^2}{\beta(1-\beta)^2\min\{\mu/\rho, 1\}\epsilon}\right)\ . $$
\end{theorem}
The big-$\bigO$ notation above suppresses universal constants as well as additive terms with a lesser order of dependence on $1/\epsilon$.}

\subsection{Spectral Bundle Methods}\label{subsec:spectral-bundle}
Directly applying the above proximal bundle method to the dual penalized formulation~\eqref{eq: penaltySDP} requires computing a subgradient of the maximum eigenvalue function. For each $z_t\in\mathbb{R}^m$, a subgradient is given by
$ -b + \alpha \Amap vv^\top \in \partial F(z_t)$
where $v$ is any top eigenvector of $\Amap^*z_t-C$ if $\lambda_{\max}(\Amap^*z_t-C)>0$ and is $0$ otherwise. This corresponds to the affine lower bound
\begin{equation}\label{eq: single-eigenvector-lb}
     F(y) \geq \langle-b, y\rangle + \langle \alpha vv^\top, \Amap^*y-C\rangle \ . 
\end{equation}

The key idea behind spectral bundle methods is to improve on this lower bound by utilizing \emph{infinitely} many affine lower bounds to model the objective more closely. For any $Z\in \symMat^\dm$:
\[
\max \{\lambda_{\max}(Z),0\} = \max_{\inprod{X}{I}\leq 1,X\succeq 0}\inprod{X}{Z}.
\]
Hence we may rewrite the penalized dual objective $F$ as 
\begin{equation}\label{eqn: dualpenaltyobjectiverewrite}
F(y) = \max_{\inprod{X}{I}\leq \alpha , X\succeq 0} \inprod{-b}{y} +\inprod{X}{\Amap^*y-C}.
\end{equation}
Of course, this form is no easier to solve than the original penalized form \eqref{eq: penaltySDP}. However, this perspective gives a natural way to model $F$ by replacing the constraint set $\{\inprod{X}{I}\leq \alpha, X\succeq 0\}$ with a smaller convex set. One choice is that we compute a matrix $V\in \RR^{n\times r}$ for some small value $r$ with orthonormal columns, i.e., $V^\top V= I\in \RR^{r\times r}$.
Then we form the following spectral lower bound model based on $V$:
\begin{equation}\label{eqn: modelFoverV}
\bar{F}_{V}(y) :\,= \max_{\inprod{S}{I}\leq \alpha , S\in \symMat_+^r} \inprod{-b}{y} +\inprod{VSV^\top}{\Amap^*y-C}.
\end{equation}
When $r=1$ and $V$ is selected as a top eigenvector of $\Amap^*z_t-C$, $\bar{F}_{V}$ recovers~\eqref{eq: single-eigenvector-lb}. For $r>1$, selecting $V$ spanning $v$ gives a strictly better approximation.
The choice of $V$ should consist of eigenvectors based on the present iterate and (potentially) the accumulation of past spectral information.

To ensure this spectral model always satisfies the needed aggregate bound~\eqref{eq:cond3}, one further refinement is needed. In addition to the selection of past and current eigenvectors generating $V$, the spectral bundle method maintains a carefully selected weighted sum of the past spectral bounds as $\bar{X}\in \symMat_+^\dm$ with
$\nucnorm{\bar{X}}=\tr(\bar{X})\leq \alpha $.
Then we build our model using this matrix $\bar{X}$ along with $V\in \RR^{\dm \times r}$ as: 
\begin{equation}\label{eq: modelaggreagte}
\bar{F}_{(V,\bar{X})}(y) :\,= \max_{\eta \tr(\bar X) +\tr(S)\leq \alpha , \eta \geq 0, S\in \symMat_+^r} \inprod{-b}{y} +\inprod{\eta\bar{X}+VSV^\top}{\Amap^*y-C}.
\end{equation}
In the following subsection, we detail the exact method for selecting $r_c$ top eigenvalues from the current iteration and $r_p$ past eigenvalues to construct $V$ and the selection of $\bar X$ needed to ensure~\eqref{eq:cond3} holds.

\subsubsection{A Family of Spectral Bundle Methods: $(\rpast,\rcurrent)$-SpecBM} \label{sec: SpectralbundleMethodExactDefinition}
The considered family of spectral bundle methods utilizes $\bar r = \rpast +\rcurrent$ eigenvectors in its spectral approximations $V$.
We maintain two sequences of iterates, initialized with $z_0=y_0=0\in \RR^\ncons$ and an initial spectral model $\bar{F}_{(V_0,\bar{X}_0)}$ given by $\bar{X}_0=0$ and any $V_0\in \real^{\dm \times \bar{r}}$ with orthonormal columns.
The sequence of iterates $y_t$ serve as the reference point (proximal center) for the proximal subproblem $\bar{F}_{(V_t,\bar{X}_t)}(z) + \frac{\rho}{2}\|z-y_t\|^2$ to be minimized, producing the next candidate point $z_{t+1}$. $(\rpast,\rcurrent)$-SpecBM does this by iterating the same basic three steps as Algorithm~\ref{alg:bundle}, formalized in Algorithm~\ref{alg:spec-bundle}:

First, each iteration $t$ of $(\rpast,\rcurrent)$-SpecBM computes $z_{t+1} := z^\star_{t}, \eta^\star_{t}$, and $S^\star_{t}$ by solving the proximal subproblem
\begin{equation}\label{eq: subproblemAMSpectralbundleMethod}
	\min_{z} \ \max_{\eta \tr(\bar X_t)  +\tr(S)\leq \alpha , \eta \geq 0,S\in \symMat_+^{\bar{r}}} \inprod{-b}{z} +\inprod{\eta\bar{X}_t+V_t S V_t^\top}{\Amap^*z-C}+\frac{\rho}{2}\norm{z-y_t}^2. 
\end{equation}
In Section~\ref{sec: ImportantsubproblemSolver}, we detail how to compute such a minimax solution. Second, $(\rpast,\rcurrent)$-SpecBM computes the next reference point $y_{t+1}$ using the same descent test of the proximal bundle method. Finally, $(\rpast,\rcurrent)$-SpecBM computes the next spectral model $(V_{t+1},\bar X_{t+1})$ as follows: Define the matrix attaining the inner maximization above as
\begin{equation} \label{eq: X_tdefinition}
	X_{t}= \eta^\star_{t}\bar{X}_t+V_t S^\star _{t}V_t^\top.
\end{equation}
Denote the eigenvalue decomposition of $S^\star_t= Q_1\Lambda_1 Q_1^\top + Q_2 \Lambda_2 Q_2^\top$, where $\Lambda_1$ consists of the largest 
	$\rpast$ eigenvalues, and $\Lambda_2$ consists of the rest of the eigenvalues. Then set the next model's $\bar{X}_{t+1}$ as
	\begin{equation}\label{eqn: updatebarX_t+1FC}
	\bar{X}_{t+1}= \eta^\star_t \bar{X}_t +V_tQ_2\Lambda_2Q_2^\top V_t^\top. 
    \end{equation}
    \newcontent{The rationale behind this update as suggested in \cite{helmberg2000spectral} is that 
    the important spectral information of $X_t$, apart from $\bar{X_t}$, are 
    those eigenvectors $V_tQ_1$  which correspond to the larger eigenvalues. By explicitly keeping those eigenvectors $V_tQ_1$ in $V_{t+1}$, we hope  
the model accuracy of $\bar{F}$ is improved in the next round. Another choice of aggregation is to set $\bar{X}_{t+1} = X_t$, which can be analyzed by similar reasoning to our analysis. We follow \eqref{eqn: updatebarX_t+1FC} as this is the one proposed in \cite{helmberg2000spectral}.}
%    \todo[inline]{The comments above were previously in a footnote. Lijun can you add some more context addressing Referee 2, point 2 here?}

Compute  
$\rcurrent$ orthonormal top eigenvectors $v_1,\dots,v_{\rcurrent}$ of the current dual  $\Amap^*z_t^\star -C$. Then set the next model's orthonormal $V_{t+1}$ to spa these $\rcurrent$ current eigenvectors and the $\rpast$ aggregate directions $V_tQ_1$. \newcontent{For example, the next orthonormal matrix $V_{t+1}$ can be computed as a $QR$ factorization of $[V_tQ_1; v_{1},\dots,v_{\rcurrent}]$, setting $V_{t+1}=Q$.}

\begin{algorithm2e}[t]
  \SetAlgoNoLine
	\KwData{$\bar X_0 = 0$, $z_0=y_0=0 \in \RR^n$, orthonormal $V_0\in \real^{\dm \times \bar{r}}$}
	\Step{}{
	Compute candidate $z_{t+1} := z^\star_{t}, \eta^\star_{t}$, and $S^\star_{t}$ solving the subproblem~\eqref{eq: subproblemAMSpectralbundleMethod}\;
	\If(\tcp*[f]{Descent step}){$\beta(F(y_t) - \bar F_{(V_t,\bar X_t)}(z_{t+1})) \leq  F(y_t)-F(z_{t+1})$}{
	  Set $y_{t+1} \leftarrow z_{t+1}$\;
	}
	\uElse(\tcp*[f]{Null step}){
	  Set $y_{t+1} \leftarrow y_{t}$\;}
	  Compute the decomposition $S^\star_t= Q_1\Lambda_1 Q_1^\top + Q_2 \Lambda_2 Q_2^\top$\;
	  Compute $v_1,\dots,v_{\rcurrent}$ top eigenvectors of $\Amap^*z_t^\star -C$\;
	  Set next $\bar X_{t+1}$ by~\eqref{eq: X_tdefinition} and~\eqref{eqn: updatebarX_t+1FC} \tcp*[r]{Update Model}
	  Set next $V_{t+1}$ spanning $v_1,\dots,v_{\rcurrent}$ and $V_tQ_1$\;
	}
	\caption{$(r_p,r_c)$-Spectral Bundle Method}
	\label{alg:spec-bundle}
\end{algorithm2e}

Note selecting of top eigenvectors of $\Amap^*z_t^\star -C$ can be viewed as selecting the primary directions describing infeasibilities in the dual slack matrix $C-\Amap^*z_t^\star$. 

\subsection{Computational Details and Concerns}\label{subsec:computation} 
\label{sec: ImportantsubproblemSolver}
In Section \ref{sec: SublinearRates}, we verify that the necessary inequalities~\eqref{eq:cond1}, \eqref{eq:cond2} and~\eqref{eq:cond3} are all satisfied by $(\rpast,\rcurrent)$-SpecBM's construction of its model $F_{(V_{t+1},\bar X_{t+1})}$. Consequently, the proximal bundle method's $O(1/\epsilon^{3})$ and $O(1/\epsilon)$ (see Section~\ref{subsec:bundle-rates}) objective value convergence guarantees apply.

To efficiently implement the spectral bundle method, one needs to efficiently solve the minimax optimization subproblem~\eqref{eq: subproblemAMSpectralbundleMethod} at each iteration. Define the $t$-th spectral set $\mathcal{W}_t$ as 
\begin{equation}\label{eq: tthspectralset}
\mathcal{W}_t = \{\eta \bar{X}_{t} + V_{t}SV_{t}^\top \mid \eta \geq 0,\;S\in \symMat_+^{\bar{r}},\;\text{and}\;\eta \tr(\bar{X}_t) + \tr(S)\leq \alpha \}.
\end{equation}
Hence the minimax subproblem~\eqref{eq: subproblemAMSpectralbundleMethod} is equivalent to
\begin{equation}\label{eq: subproblemSpectralbundleMethodsolveStep1}
\begin{aligned} 
&\min_{z} \max_{X\in \mathcal{W}_t} \inprod{-b}{z} +\inprod{X}{\Amap^*z-C}+\frac{\rho}{2}\norm{z-y_t}^2\\
= &\max_{X\in \mathcal{W}_t} \min_{z}  \inprod{-b}{z} +\inprod{X}{\Amap^*z-C}+\frac{\rho}{2}\norm{z-y_t}^2
\end{aligned} 
\end{equation}
where the equality follows from Sion's minimax theorem.

By completing the square, we find that the inner minimization is achieved only when $z =y_t + \frac{1}{\rho}\left(b-\Amap X\right)$. Consequently, the subproblem reduces to 
\begin{equation}\label{eq: subproblemSpectralbundleMethodsolveStep2}
\begin{aligned} 
&\max_{X\in \mathcal{W}_t} \min_{z}  \inprod{-b}{z} +\inprod{X}{\Amap^*z-C}+\frac{\rho}{2}\norm{z-y_t}^2\\ 
=&\max_{X\in \mathcal{W}_t} \inprod{-b}{y_t} +\inprod{X}{\Amap^*y_t-C} -\frac{1}{2\rho}\twonorm{b-\Amap X}^2 \\
= &-\min_{X\in \mathcal{W}_t} \inprod{b}{y_t} +\inprod{X}{C-\Amap^*y_t} +\frac{1}{2\rho}\twonorm{b-\Amap X}^2. \\ 
\end{aligned} 
\end{equation}
This last minimization problem in \eqref{eq: subproblemSpectralbundleMethodsolveStep2} is the augmented Lagrangian problem of Problem \eqref{p} with 
the decision variable $X$ restricted to $\mathcal{W}_{t+1}$ instead of $\symMat_+^\dm$. This interpretation as solving an augmented Lagrangian during its iterations has been explored by~\cite[section 5.2]{lemarechal2001lagrangian}.

Recalling the definition of~\eqref{eq: tthspectralset}, this augmented Lagrangian problem in \eqref{eq: subproblemSpectralbundleMethodsolveStep2} is a low dimension subproblem. Namely, it is equivalent to
\begin{equation}\label{eq: subproblemSpectralbundleMethodsolveStep3}
\begin{aligned} 
\min_{(\eta,S)\in \mathcal{S}_t}  
f_t(\eta,S),
\end{aligned} 
\end{equation}
where $f_t(\eta,S) = \inprod{b}{y_t} +\inprod{\eta \bar{X}_t+V_tSV_t^\top}{C-\Amap^*y_t}+\frac{1}{2\rho}\twonorm{b-\Amap\left( \eta \bar{X}_t+V_tSV_t^\top \right)}^2$ and $\mathcal{S}_t = \{(\eta,S)\mid {S\succeq 0,\;\eta \geq 0,\;\tr(S)+\tr(\bar{X}_t)\eta \leq \alpha}\}$.

As $\bar{r} = \rpast+\rcurrent$, this problem has dimension $1+\bar{r}(\bar{r}+1)/2$. This problem could be solved with an accelerated first-order method as gradients of $f_t$ and the projection to the constraint set $\mathcal{S}_t$ (after 
a proper scaling)
can be done with time complexity $\bigO(\bar{r}^3)$ (see 
\cite[Appendix B]{dingSpec2022} for detail). Alternatively, interior 
point method (described in \cite[Section 6]{helmberg2000spectral}) can be applied with $\bigO({\bar{r}}^6)$ time complexity per iteration
due to inverting an ${\bar{r}\choose 2} \times {\bar{r} \choose 2}$ matrix. This is particularly useful when the problem~\eqref{eq: subproblemSpectralbundleMethodsolveStep3} fails to be well conditioned.  Note $\rpast$ and $\rcurrent$ can be chosen as low as $0$ and $1$, respectively, which would yield a dimension two subproblem over 
$(S,\eta)\in \RR^{2}$ with constraints $S\geq 0,\eta\geq 0$ and $S+\tr(\bar{X}_t)\eta \leq \alpha$, and a quadratic objective $f_t$. In this case, explicit formulas for the optimal $S_t^\star$ and $\eta_t^\star$ 
can be derived easily to avoid numerical optimization. Note once optimal $S_t^\star$ and $\eta_t^\star$ are found, the needed subproblem solution is exactly $z_t^\star = y_t +\frac{1}{\rho}\left(b-\Amap(\eta_t^\star \bar{X}_t +V_tS_t^\star V_t^\top)\right)$.

\myparagraph{Storage concerns} We note that just for the purpose of computing $S^\star_t$ and $\eta^\star_t$, one needs not to store $\bar{X}_t$ but only need to store $\Amap \bar{X}_t $,
$\inprod{C}{\bar{X}_t}$, and $\tr(\bar{X}_t)$, as we may write $\bar f_t$ as 
\begin{equation}
\begin{aligned}
\bar f_t(\eta,S) =&\inprod{b}{y_t} +\eta \inprod{ \bar{X}_t }{C}
-\eta \inprod{\Amap(\bar{X})}{y_t}
+\inprod{V_tSV_t^\top }{C-\Amap^*y_t}\\ &+\frac{1}{2\rho}\twonorm{b-\eta(\Amap
	\bar{X}_t)- \Amap(V_tSV_t^\top)}^2.
\end{aligned}
\end{equation}

The updates of $\Amap \bar{X}_t $,
$\inprod{C}{\bar{X}_t}$, and $\tr(\bar{X}_t)$ are also easy given the low rank updates of $\bar{X}_t$ in \eqref{eqn: updatebarX_t+1FC}. Keeping only 
$\Amap \bar{X}_t $,
$\inprod{C}{\bar{X}_t}$ and $\tr(\bar{X}_t)$ is advantageous when $\Amap$ and $\inprod{C}{\cdot}$ can be quickly applied to low rank matrices. Moreover,
one can recover the matrix $\bar{X}_t$ without the need of storing $\bar{X}_t$ for the spectral bundle method
using the matrix sketching idea in \cite{tropp2017practical}. We further illustrate this in Section \ref{subsec:sketching}, showing such techniques enable the spectral bundle method to be applied to far larger problem instances.

	\section{Analysis of $(\rpast,\rcurrent)$-SpecBM}\label{sec: analysis}
In this section, we present our convergence guarantees for the considered family of bundle methods whenever strong duality holds, with improved guarantees whenever strict complementarity holds. Under any selection of the algorithmic parameters, Theorem~\ref{thm: sublinearates} below gives sublinear convergence guarantees for both primal and dual solutions, showing $X_t$ and $y_t$ converge in terms of feasibility and objective gap. Whenever $\rcurrent$ is selected large enough (to capture the rank of the primal optimal solutions), Theorem~\ref{thm: linear convergence of Block SBM under the extra condition strict complementarity} shows much faster linear convergence occurs.

Our sublinear convergence guarantees for $(\rpast,\rcurrent)$-SpecBM apply for any selection of the algorithmic parameters $\rho>0,\beta\in (0,1),\rpast\geq 0,\rcurrent\geq 1$. The only requirement is that the penalization parameter be selected large enough $\alpha \geq 2 D_{\xsolset}$. Under any such parameter selection, the spectral bundle method converges at a rate of $\bigO(1/\epsilon^3)$, which improves to $\bigO(1/\epsilon)$ whenever strict complementarity holds. This is formalized below and proven in Section~\ref{sec: SublinearRates}.
\begin{theorem}\label{thm: sublinearates}
	Suppose strong duality holds. Given any $\beta\in(0,1)$, $\rcurrent\geq 1$, $ \rho>0$,
	$\alpha \geq 2D_{\xsolset}$, $V_0\in \RR^{\dm\times \bar{r}}$, and 
	$z_0=y_0\in \RR^{\ncons}$, and target accuracy $\epsilon\in (0,\frac{1}{2})$,   $(\rpast,\rcurrent)$-SpecBM
	produces a solution pair $X_t$ and $y_t$ with
	$F(y_t)-F(\ysol)\leq \epsilon$ and
	\begin{align*}
	\text{approximate primal feasibility: }& \quad\|b - \Amap X_{t}\|^2 \leq \epsilon, \quad  X_{t}\succeq 0,\\
	\text{approximate dual feasibility: }& \quad \lambda_{\min}(C - \Ajmap y_{t})\geq -\epsilon,\\
	\text{approximate primal-dual optimality: }&\quad  |\langle b, y_t \rangle - \langle C, X_t\rangle| \leq \sqrt{\epsilon}
	\end{align*}
	by some iteration $t\leq \bigO(1/\epsilon^3)$.
	Moreover, if additionally strict complementarity holds, then these conditions are reached by some iteration $t\leq \bigO(1/\epsilon)$.
\end{theorem}
Deriving the convergence rates above relies on leveraging the existing analysis~\cite{Du2017,grimmer2019general,kiwiel2000efficiency,DG2020} for generic proximal bundle methods to specialized spectral models. The recent work~\cite{DG2020} further shows adaptive, nonconstant stepsize selection rules (replacing $\rho$ by a sequence of parameters $\rho_t$) can improve the $\bigO(1/\epsilon^3)$ rate to $\bigO(1/\epsilon^{2})$. Practically implementing such schemes (and computing needed constants) may be difficult and so constructing such an adaptive spectral bundle method is beyond the scope of this work (but may be of future interest).

Even greater improvements in convergence follow if (in addition to strict complementarity) the number of eigenvalues computed at each iteration satisfies 
\begin{equation}\label{eq: zddefinition}
\rcurrent \geq r_d :=\max_{y\in \ysolset}\dim(\nullspace(Z(\ysol)))
\end{equation} 
where $r_d$ denotes the largest dimension of the null space of dual slack matrices. As discussed in Section \ref{sec: discussion On low-rankness}, for several modern applications \cite{candes2009exact,candes2013phaselift,recht2010guaranteed,ding2020regularity} of \eqref{p}, $\xsol$ is unique, admits rank $\rsol :\,=\rank(\xsol)\ll \dm$, and satisfies strict complementarity under certain structural probabilistic assumptions. If in addition, the dual solution is unique, then we only need $\rcurrent \geq r_d =\rsol$.\footnote{Here the equality $r_d =\rsol$ is due to strict complementarity and the rank-nullity theorem.} The requirement $\rcurrent \geq \rsol$ can be motivated from an eigenvalue computational perspective as the bottom $\rsol$ eigenvalues of the slack $Z(y_t)$ start to coalesce once $y_t$ is close to $\ysolset$. Moreover, we numerically observe in Section \ref{sec: numerics} that even if there are multiple dual solutions, $\rcurrent\geq \rsol $ suffices to yield quick convergence while $\rcurrent <\rsol$ induces slow convergence. 

Under these conditions, $(\rpast,\rcurrent)$-SpecBM will converge linearly once $y_t$ is close enough to $\ysolset$ (note the above sublinear convergence guarantees provide a constant bound on the number of iterations required to reach any fixed neighborhood).  
This is formalized below and proven in Section \ref{sec: Proof of Local linear convergence of Block bundle method}.

\begin{theorem}\label{thm: linear convergence of Block SBM under the extra condition strict complementarity}
	Suppose strong duality and strict complementarity holds. 
	Then under proper selection of $\rho$ and any $\beta\in [0,\frac{1}{2}],\alpha \geq 2D_{\xsolset},V_0\in \RR^{\dm \times \bar{r}}, z_0=y_0 \in \RR^{\ncons}$, and $\rcurrent \geq r_d$, after at most
	$T_0$ steps, $(\rpast,\rcurrent)$-SpecBM will subsequently only take descent steps and converge linearly to an optimal solution.
	Consequently, for any $\epsilon\in(0,\frac{1}{2})$, $(\rpast,\rcurrent)$-SpecBM produces a solution pair $X_t$ and $y_t$ with  $F(y_t)-F(\ysol)\leq \epsilon$ and 
	\begin{align*}
	\text{approximate primal feasibility: }& \quad\|b - \Amap X_{t}\|^2 \leq \epsilon, \quad  X_{t}\succeq 0,\\
	\text{approximate dual feasibility: }& \quad \lambda_{\min}(C - \Ajmap y_{t})\geq -\epsilon,\\
	\text{approximate primal-dual optimality: }&\quad  |\langle b, y_t \rangle - \langle C, X_t\rangle| \leq \sqrt{\epsilon}
	\end{align*}
	by some iteration $t\leq T_0 + \bigO(\log(1/\epsilon))$.
\end{theorem}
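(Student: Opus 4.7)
The overall strategy is a two-phase argument. In the first phase, I would invoke Theorem \ref{thm: sublinearates} to guarantee that after at most $T_0$ iterations the iterate $y_t$ lies in a prescribed neighborhood $\mathcal{N}$ of $\ysolset$. In the second phase, I would show that on $\mathcal{N}$ the block structure with $\bar{r} \geq r_d$ forces the model $\bar{F}_{(V_t,\bar{X}_t)}$ to agree with $F$ exactly (or up to terms dominated by the proximal regularization), so that each iteration behaves like an essentially exact proximal-point step on $F$. Combined with a local growth condition on $F$ around $\ysolset$ available under strict complementarity, this will produce a geometric contraction of $F(y_t) - F(\ysol)$ and hence linear convergence.

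The central spectral fact is that, treating $\ysol$ as unique for exposition, strict complementarity gives a gap $\lambda_{\dm - r_d}(\zsol) \geq \gamma > 0 = \lambda_{\dm - r_d + 1}(\zsol)$ for some $\gamma > 0$. A continuity and Weyl-inequality argument then shows that for $y$ in a sufficiently small neighborhood $\mathcal{N}$ this gap persists uniformly for $Z(y)$, so the top $r_d$ eigenvectors of $\Ajmap y - C$ span a subspace that contains, up to a Davis--Kahan error, $\nullspace(\zsol)$, and hence the range of every primal optimum $\xsol$, using that $\rank(\xsol) + \rank(\zsol) = \dm$ forces $\text{range}(\xsol) \subseteq \nullspace(\zsol)$. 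Since Block-Spec sets $V_{t+1}$ to the top $\bar{r} \geq r_d$ eigenvectors of $\Ajmap z_{t+1} - C$, once $z_{t+1} \in \mathcal{N}$ the column span of $V_{t+1}$ captures the maximizing direction for the variational form of $F(y)$ over $\{X \succeq 0,\, \tr(X) \leq \alpha\}$ at every nearby $y$. This makes the spectral lower bound tight: $\bar{F}_{(V_{t+1},\bar{X}_{t+1})}(y) = F(y)$ on a further-shrunk neighborhood. The aggregate $\bar{X}_t$ does not interfere, because choosing $\eta = 0$ in the model recovers the pure-$V$ model, and the standing assumption $\alpha \geq 2 D_{\xsolset}$ leaves enough trace budget to reproduce any primal maximizer.

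Once the model is exact, the subproblem \eqref{eq: subproblemAMSpectralbundleMethod} collapses to the proximal update $y_{t+1} = \argmin_y \; F(y) + \frac{\rho}{2}\twonorm{y - y_t}^2$. For any $\beta \leq \tfrac{1}{2}$, the descent test $F(z_{t+1}) \leq F(y_t) - \beta(F(y_t) - \bar{F}(z_{t+1}))$ holds automatically by strong convexity of the regularized model together with $\bar{F} = F$ on $\mathcal{N}$, so $y_{t+1} = z_{t+1}$ from step $T_0$ onward. Classical proximal-point analysis together with the local growth of $F$ (a consequence of strict complementarity) then yields $F(y_{t+1}) - F(\ysol) \leq \gamma'(F(y_t) - F(\ysol))$ for some $\gamma' \in (0,1)$ depending on $\rho$ and the growth modulus. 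Iterating gives the $\varkappa \log(1/\epsilon)$ bound on the dual objective gap, and the approximate primal feasibility, dual feasibility, and duality-gap guarantees on $X_t$ and $y_t$ follow from the primal--dual translation developed in Section \ref{sec: Relating primal and dual convergence}.

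The principal obstacle is controlling the mismatch between $V_{t+1}$, which is built from the exploring iterate $z_{t+1}$ rather than from $\ysol$, and the true active subspace. Davis--Kahan only gives that $\text{range}(V_{t+1})$ approximates $\nullspace(\zsol)$ up to an error of order $\dist(z_{t+1},\ysolset)/\gamma$, so the parameter $\rho$ and the radius of $\mathcal{N}$ must be calibrated so that this approximation error is of lower order than the progress made by the proximal term; otherwise one risks stagnation rather than linear contraction. A secondary technical point is extending the above from the unique-dual case to the general case, which requires tracking a nearest dual solution to $z_{t+1}$ in $\ysolset$ and using $r_d$ (the \emph{maximum} null-space dimension) to handle the worst-case active subspace, ensuring the block $V_{t+1}$ is always large enough to envelop it.
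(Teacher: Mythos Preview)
Your two-phase outline and your identification of the key ingredients (spectral gap under strict complementarity, Davis--Kahan perturbation, proximal-point contraction under quadratic growth) match the paper's proof structure. The gap is in the second phase: the claim that $\bar{F}_{(V_{t+1},\bar{X}_{t+1})}(y) = F(y)$ on a neighborhood is false, and your subsequent argument (``the subproblem collapses to the exact proximal update'') depends on it. The maximizer in $F(y) = -\langle b,y\rangle + \alpha\max\{\lambda_{\max}(-Z(y)),0\}$ is the top eigenvector of $-Z(y)$, which moves with $y$; even if $\mathrm{range}(V_{t+1})$ were \emph{exactly} $\nullspace(\zsol)$, the top eigenvector of $-Z(y)$ for $y\neq\ysol$ only lies \emph{approximately} in that subspace. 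So the model is never exact on an open set, and your error estimate of order $\dist(z_{t+1},\ysolset)/\gamma$ is tracking the wrong quantity: what matters is how well $V_t$ (built at $y_t$) captures the top eigenvector at the \emph{evaluation} point $y$, and that error scales with $\|y-y_t\|$, not with distance to $\ysolset$.

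The paper closes this gap by replacing exactness with a \emph{quadratic accuracy} bound: it proves (Lemma~\ref{lem: importantLemmaQuadraticAccurateModel}) that once $y_t$ is $\delta/3$-close to $Z(\ysolset)$ in spectrum, one has $0\leq F(y)-\bar{F}_t(y)\leq \tfrac{\rho}{2}\|y-y_t\|^2$ for all $y$, with $\rho$ depending only on $\opnorm{\Amap^*}$, $\delta$, and $\sup_{\ysol}\opnorm{Z(\ysol)}$. The Davis--Kahan input enters here, but it is converted into a bound quadratic in $\fronorm{Z(y)-Z(y_t)}$ rather than a statement about closeness to $\nullspace(\zsol)$. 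With this quadratic accuracy in hand, the paper shows directly (without exactness) that (i) the descent test passes for any $\beta\leq\tfrac{1}{2}$, via $F(z_t^\star)\leq \bar{F}_t(z_t^\star)+\tfrac{\rho}{2}\|z_t^\star-y_t\|^2$ combined with the strong-convexity three-point inequality, and (ii) the same three-point inequality plus quadratic growth yields $\dist(z_t^\star,\ysolset)\leq\sqrt{\rho/(2\gamma+\rho)}\,\dist(y_t,\ysolset)$. Your hedge ``or up to terms dominated by the proximal regularization'' is pointing at exactly this mechanism, but it has to be made precise as a uniform bound in $\|y-y_t\|^2$, and the descent/contraction steps have to be re-derived under that weaker hypothesis rather than under exactness.
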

Bounds on $T_0$ and proper selection of $\rho$ are discussed at the beginning of Section \ref{sec: Proof of Local linear convergence of Block bundle method}.

\subsection{Preliminaries on Growth Bounds and Primal-Dual Convergence}\label{sec: analtical conditon}
Before proving our two main convergence theorems for spectral bundle methods, we develop a few preliminary lemmas. These results characterize the effect of strong duality and strict complementarity on the penalized dual problem~\eqref{eq: penaltySDP} and then relate approximately minimizing~\eqref{eq: penaltySDP} to approximate feasibility and optimality of both~\eqref{p} and~\eqref{d}.

Whenever the considered primal-dual SDP pair satisfies strong duality, they each satisfy a growth bound, ensuring that the objective gap and/or level of infeasibility grow quickly as one moves away from the set of optimal solutions. Under strict complementarity, we find this growth is quadratic, which facilitates our faster convergence rates for spectral bundle methods. 
\begin{lemma}[Quadratic Growth]\cite[Section 4]{sturm2000error} \label{lem: qg}
	Suppose strong duality holds for \eqref{p} and \eqref{d}, then there exists $\zeta_1,\zeta_2\geq 1$, such that for any fixed $\epsilon>0$, there are some $\gamma_1,\gamma_2>0$ such that for all $y$ with $F(y)\leq \inprod{-b}{\ysol}+ \epsilon$, and all $X\succeq 0$ with 
	$|\inprod{C}{X}- \inprod{C}{\xsol}|\leq \epsilon$ and $\twonorm{\Amap{X}-b}\leq \epsilon$:
	\begin{align} 
	\dist^{\zeta_1}(y,\ysolset) & \leq  \frac{1}{\gamma _1}(F(y)-F(\ysol)), \label{eq: dualg} \\ \dist^{\zeta_2}(X,\xsolset) & \leq \frac{1}{\gamma_2}\left(\abs{\inprod{C}{X}-\inprod{C}{\xsol}}+\twonorm{\Amap{X}-b}\right). \label{eq: primalg}
	\end{align}
	If in addition, strict complementarity holds for some pair of primal dual solutions  $(\xsol,\ysol)\in 
	\xsolset\times \ysolset$, then $\zeta_1=\zeta_2=2$.
\end{lemma}
\begin{proof}
	Define the sublevel set $S_1=\{y\mid F(y)\leq \inprod{-b}{\ysol}+ \epsilon\}$, and the set  $S_2=\{X\mid X\succeq 0,
	|\inprod{C}{X}- \inprod{C}{\xsol}|\leq \epsilon,\,\text{and}\, \twonorm{\Amap{X}-b}\leq \epsilon\}$. We first show these two sets are compact. Indeed, 
	using \cite[Theorem 7.21]{ruszczynski2006nonlinear}, the penalty form 
	$\min_{X\succeq 0} g(X):= \inprod{C}{X} +\gamma \twonorm{\Amap X-b}$
	has the same solution set as 
	the primal SDP \eqref{p} for some large $\gamma >0$ . Thus the compactness of the set $S_1$ and $S_2$ is ensured by the compactness of the primal and dual solution sets $\xsolset$, $\ysolset$.
	
	Next, we utilize the result in \cite[Theorem 4.5.1]{drusvyatskiy2017many}, which is a restatement of the result in  \cite[Section 4]{sturm2000error}. Let us focus on the primal inequality \eqref{eq: primalg}. The optimal solution set of \eqref{p} is 
	$\xsolset = {\mathcal{L}} \cap \mathbb{S}_{+}^n$ where ${\mathcal{L}} :\,=\{X\mid \inprod{C}{X}=\pval,\, \Amap{X}=b\}$. Since the sublevel set $S_2$ is compact, the result in \cite[Theorem 4.5.1]{drusvyatskiy2017many} ensures that for some $c_1>0$, and $d_1>0$, there holds the inequality $\dist(X,\xsolset)^{2^{d_1}}\leq c_1 \dist(X,\mathcal{L})$ for any $X\succeq 0$. Here the number $d_1$ is called the singularity and is bounded by $n$ \cite[Lemma 3.6]{sturm2000error}.  Since $\dist(X,\mathcal{L})\leq c_2\left(\abs{\inprod{C}{X}-\inprod{C}{\xsol}}+\twonorm{\Amap{X}-b}\right)$ for some $c_2>0$ as $\mathcal{L}$ is an affine space, we have shown the inequality \eqref{eq: primalg}. In addition, if strict complementarity holds, then $d_1 \leq 1$ due to \cite[Section 5]{sturm2000error}. A similar argument applies to the dual inequality \eqref{eq: dualg} using \cite[Theorem 4.5.1]{drusvyatskiy2017many}, compactness of $\ysolset$, and that  $\dist(Z,\mathbb{S}^n_+)\leq n\max\{\lambda_{\max}(-Z),0\}$ for any $Z\in \mathbb{S}^n$. 
\end{proof}

Given $F(y_t)-F(\ysol)$ is converging to have zero objective gap, the above growth bound ensures $\dist(y_t,\ysolset)$ converges to zero. However, the corresponding rate of convergence would depend on the generic exponent $\zeta_1$. The following three lemmas provide direct relationships (without dependence on $\zeta_1$ or $\zeta_2$) between the spectral bundle method's convergence on the penalized dual formulation and the primal-dual feasibility and optimality of its iterates $X_t$ and $y_t$. Utilizing these bounds, our analysis of $(\rpast,\rcurrent)$-SpecBM can then focus on showing convergence in the penalized dual objective gap. For ease of notation, we utilize the shorthand $\bar{F}_t:\, = \bar{F}_{(V_t,\bar{X}_t)}$ to denote the spectral bundle method's approximation of $F$ at iteration $t$. 
%\newcontent{These lemmas' proofs are based on the primal and dual relationship between \eqref{p} and \eqref{eq: penaltySDP} and are deferred to the technical report~\cite[Section 3.1]{dingSpec2022}.}
\begin{lemma}[Primal Feasibility]\label{lem:primal-feas}
	At every descent step $t$, $(\rpast,\rcurrent)$-SpecBM has
	$$ X_{t}\succeq 0, \quad\text{and}\quad \|b - \Amap X_{t}\|^2\leq \frac{2\rho}{\beta}(F(y_{t}) - F(\ysol )). $$
\end{lemma}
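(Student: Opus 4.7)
The plan is to combine three ingredients: the PSD structure of $X_t$, the first-order optimality characterization of the inner subproblem derived in Section \ref{sec: ImportantsubproblemSolver}, and the descent-step test in step 2 of the algorithm.

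For the first claim, observe from \eqref{eq: X_tdefinition} that $X_t = \eta^\star_t \bar{X}_t + V_t S^\star_t V_t^\top$. Since $\eta^\star_t \geq 0$ by feasibility for $\mathcal{W}_t$, since $\bar{X}_t \succeq 0$ (maintained inductively by the aggregation updates \eqref{eq: aggregationUpdateOurMethod} and \eqref{eqn: updatebarX_t+1FC} starting from $\bar{X}_0 = 0$), and since $S^\star_t \succeq 0$ implies $V_t S^\star_t V_t^\top \succeq 0$, the sum $X_t$ is PSD. This takes care of $X_t \succeq 0$.

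For the bound on $\|b - \Amap X_t\|^2$, I would first use the formula $z_{t+1} = y_t + \frac{1}{\rho_t}(b - \Amap X_t)$ obtained by solving the inner minimization in \eqref{eq: subproblemSpectralbundleMethodsolveStep2}, which immediately gives
\[
\|b - \Amap X_t\|^2 = \rho_t^2 \|z_{t+1} - y_t\|^2.
\]
Next, since $z_{t+1}$ minimizes $\tilde{F}_{(V_t,\bar{X}_t,y_t,\rho_t)}(z) = \bar{F}_t(z) + \frac{\rho_t}{2}\|z - y_t\|^2$, comparing the value at $z_{t+1}$ with the value at $y_t$ yields
\[
\bar{F}_t(z_{t+1}) + \frac{\rho_t}{2}\|z_{t+1} - y_t\|^2 \leq \bar{F}_t(y_t) \leq F(y_t),
\]
where the last inequality uses the lower-bound property \eqref{eq: subgradientAndAggregation}. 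Rearranging gives $\frac{\rho_t}{2}\|z_{t+1} - y_t\|^2 \leq F(y_t) - \bar{F}_t(z_{t+1})$, so
\[
\|b - \Amap X_t\|^2 \leq 2\rho_t\bigl(F(y_t) - \bar{F}_t(z_{t+1})\bigr).
\]

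Finally, because step $t$ is a descent step, the test in step 2 of the algorithm yields $F(z_{t+1}) \leq F(y_t) - \beta(F(y_t) - \bar{F}_t(z_{t+1}))$. Since $F(z_{t+1}) \geq F(\ysol)$ by optimality of $\ysol$, rearranging gives
\[
\beta\bigl(F(y_t) - \bar{F}_t(z_{t+1})\bigr) \leq F(y_t) - F(z_{t+1}) \leq F(y_t) - F(\ysol),
\]
and substituting into the previous display completes the bound. I do not anticipate a serious obstacle here; the only subtle point is keeping track of which inequality supplies which factor, in particular that the factor $1/\beta$ comes exclusively from the descent-step test, while the factor $2\rho_t$ comes from the quadratic regularization in the subproblem.
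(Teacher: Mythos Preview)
Your proof is correct and follows essentially the same route as the paper: you use the subproblem optimality relation $b-\Amap X_t=\rho_t(z_{t+1}-y_t)$, bound $\frac{\rho_t}{2}\|z_{t+1}-y_t\|^2$ by $F(y_t)-\bar F_t(z_{t+1})$ via minimality of $z_{t+1}$ for the regularized model, and then invoke the descent-step test to replace $F(y_t)-\bar F_t(z_{t+1})$ by $\frac{1}{\beta}(F(y_t)-F(\ysol))$. The only cosmetic difference is that the paper writes $y_{t+1}$ in place of $z_{t+1}$ (which coincide at a descent step), so there is nothing to add.
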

%\begin{comment}
\begin{proof}
According to the definition of $X_t$ in \eqref{eq: X_tdefinition},	we have $X_{t} =\eta^\star_{t}\bar{X}_t+V_t S^\star _{t}V_t^\top$.
	Since $\eta\geq 0$ and $S^\star_t\succeq 0$ by construction in \eqref{eq: subproblemAMSpectralbundleMethod}, $X_t$ is 
	positive semidefinite. 
	
	The first-order optimality condition for minimizing \eqref{eq: subproblemAMSpectralbundleMethod} ensures
	\begin{equation}\label{eq: linearFeasibilityEq1}
	-b + \Amap X_t  = \rho (y_{t}-y_{t+1}).
	\end{equation}
	Hence $\twonorm{-b + \Amap X_t }^2  = \rho^2\twonorm{ (y_{t}-y_{t+1})}^2$. The difference $y_{t}-y_{t+1}$ can be bounded as follows by the penalized dual objective value gap, completing the proof,
	$$ \frac{\rho}{2}\|y_{t+1} - y_t\|^2 \leq F(y_{t})-\bar F_{t}(y_{t+1}) \leq \frac{F(y_{t})-F(y_{t+1})}{\beta} \leq \frac{F(y_{t})-F(\ysol)}{\beta}$$
	where the first inequality follows as $y_{t+1} = z_{t+1}$ minimizes
	$\bar F_{t}(z)+\frac{\rho}{2}\|z-y_t\|^2$
	and the second follows from the definition of a descent step.
\end{proof}
%\end{comment} 
\begin{lemma}[Dual Feasibility]\label{lem:dual-feas}
	At every descent step $t$, provided $\alpha \geq 2 D_{\xsolset}$, $(\rpast,\rcurrent)$-SpecBM has
	$$ \lambda_{\min}(C - \Ajmap y_{t+1})\geq \frac{-(F(y_{t}) - F(\ysol))}{D_{\xsolset}}.$$
\end{lemma}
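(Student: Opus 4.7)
The plan is to extract an upper bound on $\lambda_{\max}(\Ajmap y_{t+1} - C)$ in terms of $F(y_{t+1}) - F(\ysol)$ by combining strong duality with the hypothesis $\alpha \geq 2 D_{\xsolset}$. The argument has three short moves: a trivial minorant of the penalty term, a substitution via primal feasibility of $\xsol$, and descent-step monotonicity.

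First, since $\alpha > 0$ and $\max\{\cdot,0\}$ dominates its argument, every $y$ satisfies
$$F(y) \geq \inprod{-b}{y} + \alpha \lambda_{\max}(\Ajmap y - C).$$
Specializing at $y = y_{t+1}$ and rearranging gives $\alpha \lambda_{\max}(\Ajmap y_{t+1} - C) \leq F(y_{t+1}) + \inprod{b}{y_{t+1}}$. Now pick any $\xsol \in \xsolset$. Primal feasibility $\Amap \xsol = b$ together with strong duality $\inprod{C}{\xsol} = -\pval = -F(\ysol)$ rewrites
$$\inprod{b}{y_{t+1}} = \inprod{\xsol}{\Ajmap y_{t+1}} = \inprod{\xsol}{\Ajmap y_{t+1} - C} + \inprod{C}{\xsol} = \inprod{\xsol}{\Ajmap y_{t+1} - C} - F(\ysol),$$
and since $\xsol \succeq 0$, the matrix inner product is bounded by $\inprod{\xsol}{\Ajmap y_{t+1} - C} \leq \tr(\xsol)\lambda_{\max}(\Ajmap y_{t+1} - C)$. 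Combining the two displays yields
$$(\alpha - \tr(\xsol))\,\lambda_{\max}(\Ajmap y_{t+1} - C) \leq F(y_{t+1}) - F(\ysol).$$

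Finally, $\tr(\xsol) = \nucnorm{\xsol} \leq D_{\xsolset}$ and $\alpha \geq 2 D_{\xsolset}$ force $\alpha - \tr(\xsol) \geq D_{\xsolset} > 0$; dividing (and using $F(y_{t+1}) \geq F(\ysol)$, so the bound is vacuous when $\lambda_{\max}(\Ajmap y_{t+1} - C)\leq 0$) gives $\lambda_{\max}(\Ajmap y_{t+1} - C) \leq (F(y_{t+1}) - F(\ysol))/D_{\xsolset}$. The descent-step criterion $F(y_{t+1}) \leq F(y_t) - \beta(F(y_t) - \bar F_t(z_{t+1}))$ together with $\bar F_t \leq F$ and $\beta < 1$ implies $F(y_{t+1}) \leq F(y_t)$, which relaxes the bound and, upon negation via $\lambda_{\min}(C - \Ajmap y_{t+1}) = -\lambda_{\max}(\Ajmap y_{t+1} - C)$, yields the claimed inequality. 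I do not foresee a genuine obstacle: the only delicate point is ensuring the coefficient $\alpha - \tr(\xsol)$ is strictly at least $D_{\xsolset}$, which is precisely why the hypothesis carries a factor of two rather than just $\alpha \geq D_{\xsolset}$.
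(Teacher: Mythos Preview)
Your argument is correct and follows essentially the same route as the paper's own proof: both use primal feasibility of $\xsol$ and strong duality to rewrite $\inprod{b}{y_{t+1}}$ (equivalently $\inprod{b}{y_{t+1}-\ysol}$), bound $\inprod{\xsol}{\Ajmap y_{t+1}-C}$ by $\nucnorm{\xsol}\lambda_{\max}(\Ajmap y_{t+1}-C)$, and then exploit $\alpha-\nucnorm{\xsol}\geq D_{\xsolset}$ together with descent-step monotonicity. The only cosmetic difference is that you drop the $\max\{\cdot,0\}$ at the outset via the minorant $F(y)\geq \inprod{-b}{y}+\alpha\lambda_{\max}(\Ajmap y-C)$, whereas the paper carries $\min\{\lambda_{\min}(\cdot),0\}$ through and handles the sign case at the end.
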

%\begin{comment}
\begin{proof}
	Strong duality ensures that for any $\xsol\in\xsolset$, one has $\inprod{C}{\xsol}=\inprod{b}{\ysol}$, or equivalently $\inprod{\xsol}{Z(\ysol)}=0$. Hence
	\begin{equation*} 
	\begin{aligned}
	\langle b, y_{t+1}-\ysol \rangle & = \langle \Amap\xsol, y_{t+1}-\ysol \rangle\\
	& = \langle \xsol, \Ajmap(y_{t+1}-\ysol)\rangle \\
	& = \langle \xsol, Z(\ysol)-Z(y_{t+1})\rangle\\
	& \leq -\nucnorm{\xsol}\min\{\lambda_{\min}(C - \Ajmap y_{t+1}),0\}.
	\end{aligned}
	\end{equation*}
	Since $\ysol$ minimizes \eqref{eq: penaltySDP}, 
	we have
	\begin{equation*} 
	\begin{aligned}
	F(y_{t}) - F(\ysol) \geq F(y_{t+1}) - F(\ysol) & = \langle -b, y_{t+1}-\ysol \rangle -\alpha\min\{\lambda_{\min}(C - \Ajmap y_{t+1}),0\}\\
	& \geq -\|\xsol\|\min\{\lambda_{\min}(C - \Ajmap y_{t+1}),0\}
	\end{aligned}
	\end{equation*}
	where the last inequality uses that $\alpha \geq 2D_{\xsolset} \geq 2\nucnorm{\xsol}$. 
Since $\xsol$ is arbitrary, we have the claimed feasibility bound.
\end{proof}
%\end{comment}
\begin{lemma}[Primal-Dual Optimality]\label{lem:optimality}
	At every descent step $t$, provided $\alpha \geq 2 D_{\xsolset}$, $(\rpast,\rcurrent)$-SpecBM has
	$$\langle b, y_{t+1} \rangle - \langle C, X_t\rangle \leq \frac{\alpha}{D_{\xsolset}}(F(y_{t}) - F(\ysol)) + \sqrt{\frac{2\rho}{\beta}(F(y_{t}) - F(\ysol))}\ D_{y_0}$$
	and below by
	$$
	\langle b, y_{t+1} \rangle - \langle C, X_t\rangle \geq -\frac{1-\beta}{\beta}(F(y_{t}) - F(\ysol)) - \sqrt{\frac{2\rho}{\beta}(F(y_{t}) - F(\ysol))}\ D_{y_0}.$$
\end{lemma}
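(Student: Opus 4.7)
The plan is to pass through the inequality $\bar F_t(y_{t+1}) \leq F(y_{t+1})$ for the upper bound and through the descent-step inequality $F(y_{t+1}) \leq F(y_t) - \beta(F(y_t) - \bar F_t(y_{t+1}))$ for the lower bound, after first extracting an explicit identity for $\langle b, y_{t+1}\rangle - \langle C, X_t\rangle$ from the KKT optimality of the minimax subproblem \eqref{eq: subproblemAMSpectralbundleMethod}.

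First I would verify that $X_t$ is exactly the inner maximizer in $\bar F_t(y_{t+1})=\bar F_{(V_t,\bar X_t)}(y_{t+1})$; this follows from the KKT conditions of \eqref{eq: subproblemAMSpectralbundleMethod} together with the descent-step identity $y_{t+1}=z_{t+1}$. Using this, a direct expansion yields the identity
\[
\bar F_t(y_{t+1}) \;=\; \langle \Amap X_t - b, y_{t+1}\rangle - \langle C, X_t\rangle,
\]
which, after rearranging, gives
\[
\langle b, y_{t+1}\rangle - \langle C, X_t\rangle \;=\; \bar F_t(y_{t+1}) + \langle b, y_{t+1}\rangle + \langle b - \Amap X_t, y_{t+1}\rangle.
\]
The Cauchy-Schwarz bound $|\langle b - \Amap X_t, y_{t+1}\rangle| \leq \|b-\Amap X_t\|\,D_y$, combined with the feasibility bound $\|b-\Amap X_t\|^2\leq \frac{2\rho_t}{\beta}(F(y_t)-F(\ysol))$ from Lemma~\ref{lem:primal-feas}, handles the last term in both the upper and lower bounds.

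For the upper bound I would use $\bar F_t\leq F$ on the right-hand side together with the elementary identity $F(y)+\langle b,y\rangle = \alpha\max\{\lambda_{\max}(\Amap^* y-C),0\} = -\alpha\min\{\lambda_{\min}(Z(y)),0\}$ coming from the definition of $F$ in \eqref{eq: penaltySDP}. Applying Lemma~\ref{lem:dual-feas} to control $\lambda_{\min}(Z(y_{t+1}))$ then converts the middle term $\bar F_t(y_{t+1}) + \langle b,y_{t+1}\rangle$ into $\frac{\alpha}{D_{\xsolset}}(F(y_t)-F(\ysol))$, yielding the claimed upper bound.

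For the lower bound, the descent-step test rearranges to $\bar F_t(y_{t+1}) \geq \frac{1}{\beta} F(y_{t+1}) - \frac{1-\beta}{\beta} F(y_t)$. Adding $\langle b, y_{t+1}\rangle$ to both sides and using the trivial inequality $\langle b,y_{t+1}\rangle \geq -F(y_{t+1})$ (again from the definition of $F$) collapses the right-hand side to $\frac{1-\beta}{\beta}(F(y_{t+1})-F(y_t))$, which is at least $-\frac{1-\beta}{\beta}(F(y_t)-F(\ysol))$ since $F(y_{t+1})\geq F(\ysol)$. Combining with the Cauchy-Schwarz bound on the $\langle b-\Amap X_t, y_{t+1}\rangle$ term completes the proof. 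The only step that requires care is identifying $X_t$ with the inner maximizer of $\bar F_t(y_{t+1})$; everything that follows is routine algebra applying the two previous lemmas.
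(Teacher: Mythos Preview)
Your proof is correct and follows essentially the same route as the paper. Both arguments rest on the same decomposition: your term $\bar F_t(y_{t+1}) + \langle b, y_{t+1}\rangle$ is literally equal to the paper's $\langle X_t, \Ajmap y_{t+1} - C\rangle$ (since $\bar F_t(y_{t+1}) = -\langle b, y_{t+1}\rangle + \langle X_t, \Ajmap y_{t+1} - C\rangle$), and both proofs then bound this quantity above via Lemma~\ref{lem:dual-feas} and below via the descent-step inequality together with the nonnegativity of $\alpha\max\{\lambda_{\max}(\Ajmap y_{t+1}-C),0\}$ (which is exactly your inequality $\langle b, y_{t+1}\rangle \geq -F(y_{t+1})$). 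The identification of $X_t$ as the inner maximizer at $z_{t+1}=y_{t+1}$ is the saddle-point property of the subproblem and is also used implicitly in the paper (equation~\eqref{eq: F_{t+1}(z_{t+1})}).
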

%\begin{comment}
\begin{proof}
	The standard duality analysis shows the primal-dual objective gap equals
	\begin{equation*} 
	\begin{aligned}
	\langle b, y_{t+1} \rangle - \langle C, X_t\rangle &= \langle \Amap X_t, y_{t+1} \rangle - \langle C, X_t\rangle + \langle b-\Amap X_t, y_{t+1}\rangle\\
	&= \langle X_t, \Ajmap y_{t+1} - C\rangle + \langle b-\Amap X_t, y_{t+1}\rangle.
	\end{aligned}
	\end{equation*}
	Notice that the second term here is bounded above and below as
	$$ |\langle b-\Amap X_t, y_{t+1}\rangle| \leq \sqrt{\frac{2\rho}{\beta}(F(y_{t}) - F(\ysol))}\ \|y_{t+1}\|\leq \sqrt{\frac{2\rho}{\beta}(F(y_{t}) - F(\ysol))}\ D_{y_0}$$
	using Lemma~\ref{lem:primal-feas} and that $\|y_{t+1}\|\leq D_{y_0}$ as $F(y_{t+1})\leq F(y_t)\leq F(y_0)$.
	Hence we only need to bound the first term above, $\langle X_t, \Ajmap y_{t+1} - C\rangle$, showing that the spectral bundle method approaches satisfying complementary slackness.
	
	An upper bound on this inner product follows from Lemma~\ref{lem:dual-feas} as
	$$ 
	\langle X_t, \Ajmap y_{t+1} - C \rangle \leq -\nucnorm{X_t}\lambda_{\min}(C - \Ajmap y_{t+1}) \leq \frac{\nucnorm{X_t}(F(y_{t})-F(\ysol))}{D_{\xsolset}}.
	$$
	Combining the above with $\tr(X_t) \leq \alpha$ by construction, we have  
	$$
	\langle b, y_{t+1} \rangle - \langle C, X_t\rangle \leq \frac{\alpha}{D_{\xsolset}}(F(y_{t}) - F(\ysol)) + \sqrt{\frac{2\rho}{\beta}(F(y_{t}) - F(\ysol))}\ D_{y_0}.
	$$
	A lower bound on this inner product follows as
	\begin{equation*} 
	\begin{aligned}
	\frac{1-\beta}{\beta}(F(y_{t}) - F(y_{t+1})) 
	&\geq F(y_{t+1}) - \bar F_{t}(y_{t+1})\\
	& = -\alpha\min\{\lambda_{\min}(C-\Ajmap y_{t+1}),0\} + \langle C, X_{t}\rangle -\langle \Amap X_{t}, y_{t+1}\rangle\\
	& \geq \langle X_{t}, C - \Ajmap y_{t+1}\rangle,
	\end{aligned}
	\end{equation*}
	where the first inequality follows from the definition of a descent step, the equality follows 
	from the definition of $\bar{F}_t$ and the optimality of $X_t$ in 
	\eqref{eq: subproblemAMSpectralbundleMethod}. Hence
	\begin{equation*} 
	\begin{aligned}
	\langle b, y_{t+1} \rangle - \langle C, X_t\rangle &\geq -\frac{1-\beta}{\beta}(F(y_{t}) - F(\ysol)) - \sqrt{\frac{2\rho}{\beta}(F(y_{t}) - F(\ysol))}\ D_{y_0}.%\qedhere
	\end{aligned}
	\end{equation*}
\end{proof}
%\end{comment}

\subsection{Proof of Theorem \ref{thm: sublinearates}}\label{sec: SublinearRates}
At some iteration $t$ of $(\rpast,\rcurrent)$-SpecBM, let $v_+$ be a top eigenvector of $\lambda_{\max}(\Amap^* z_{t+1}-C)$ if $\lambda_{\max}(\Amap^* z_{t+1}-C)>0$ 
and be zero otherwise. Then denote $g_{t+1}=-b + \alpha \Amap (v_+v_+^\top) \in \partial F(z_{t+1})$ as the subgradient corresponding to this maximum eigenvector and $s_{t+1}= -\rho (z_{t+1}-y_t)\in\partial \bar F_t(z_{t+1})$ as the aggregate subgradient, certifying optimality of~\eqref{eq: subproblemAMSpectralbundleMethod}.
For the existing proximal bundle method convergence rates to apply (see Section~\ref{subsec:bundle-rates}), we need to verify conditions~\eqref{eq:cond1}, \eqref{eq:cond2}, and \eqref{eq:cond3} hold with $\bar{f}_{t}= \bar F_t$.  
\newcontent{Given these conditions, Theorem~\ref{thm:prox-bm-convergence} ensures $(\rpast,\rcurrent)$-SpecBM has penalized dual objective gap $F(y_t)-F(\ysol)$ converging at a rate of $\bigO(1/\epsilon^3)$, or $\bigO(1/\epsilon)$ whenever quadratic growth holds (e.g., whenever strict complementarity holds by Lemma~\ref{lem: qg}).} Then our claimed results on primal feasibility, dual feasibility, and primal-dual optimality follow by applying Lemmas \ref{lem:primal-feas}, \ref{lem:dual-feas}, and \ref{lem:optimality}. 

\newcontent{
\subsubsection{Verifying~\eqref{eq:cond1}, \eqref{eq:cond2}, and \eqref{eq:cond3}}} Recall the spectral bundle method's model approximates $\{X \mid \langle X, I \rangle \leq \alpha, X \succeq 0\}$ at iteration $t+1$ by the spectral set
\[
\mathcal{W}_{t+1}:=\{\eta \bar{X}_{t+1} + V_{t+1}SV_{t+1}^\top \mid \eta \geq 0,\;S\in \symMat_+^{\bar r},\;\text{and}\;\eta\tr(\bar X_{t+1})  + \tr(S)\leq \alpha \},
\]
giving $\bar F_{t+1} (y) = \langle-b, y\rangle + \max_{X\in \mathcal{W}_{t+1}}\langle X, \Amap^*y-C\rangle$.

First we note that~\eqref{eq:cond1} is immediate for $(\rpast,\rcurrent)$-SpecBM since its model always lower bounds the true objective~\eqref{eqn: dualpenaltyobjectiverewrite} as $\mathcal{W}_{t+1}\subseteq \{X \mid \langle X, I \rangle \leq \alpha, X \succeq 0\}$.

Next we verify~\eqref{eq:cond2}. Since $V_{t+1}$ spans $v_+$, some vector $s$ has $V_{t+1}s = v_+$. Consequently considering $\eta=0$ and $S=\alpha ss^\top$ shows $\alpha v_+v_+^\top\in\mathcal{W}_{t+1}$ and so $$\bar F_{t+1}(y) \geq \langle-b, y\rangle + \langle \alpha v_+v_+^\top, \Amap^*y-C\rangle = F(z_{t+1}) + \langle g_{t+1}, y-z_{t+1}\rangle$$ holds with $g_{t+1}=-b + \alpha \Amap (v_+v_+^\top)$.

Finally, we verify~\eqref{eq:cond3}.
By the optimality condition of \eqref{eq: subproblemAMSpectralbundleMethod}, and definition of $X_t,z_{t+1}$, we know that 
\begin{align}
-b + \Amap X_t  & = \rho (y_t-z_{t+1})=s_{t+1} \label{eq: optimalityConditionEqsubproblemAMSpectralbundleMethod}\\ 
\bar{F}_{t}(z_{t+1}) & = \inprod{-b}{z_{t+1}} +\inprod{X_t}{\Amap^*z_{t+1}-C}.\label{eq: F_{t+1}(z_{t+1})} 
\end{align}
Similar to the reasoning for~\eqref{eq:cond2}, we first show $X_t$ lies in $\mathcal{W}_{t+1}$: To see this,
recall that $V_{t+1}$ was selected to span the $r_p$ top eigenvector directions of $S_t^\star$ given by $V_tQ_1$.
%Denote the left $\dm\times r$ matrix of $R$ by $R_{:,1:r}$ . 
% We have $V_tQ_1  = Q R_{:,1:r}$.
% Then we can set $\eta = 1$, and the left top $r\times r$ submatrix of $S$ as $S_{1:r,1:r}=R_{:,1:r}\Lambda_1R_{:,1:r}^\top$, and the rest of $S$ to be all zeroes.
Then there is an $S$ such that $V_{t+1}SV_{t+1}^\top = V_tQ_1\Lambda_1 Q_1^\top V_t^\top$.\footnote{Indeed, one can take $S =V_{t+1}^\top V_tQ_1\Lambda_1 Q_1^\top V_t^\top V_{t+1}$ as $V_{t+1}$ spans the columns of $V_tQ_1$.} This choice of $S$ alongside $\eta=1$ 
has $X_t = \eta \bar{X}_{t+1} + V_{t+1}SV_{t+1}^\top $ due to the updating scheme \eqref{eqn: updatebarX_t+1FC} of $\bar{X}_{t+1}$ 
and definition of $X_t$ in \eqref{eq: X_tdefinition}. This choice of $S$ is feasible because $S\succeq 0$ as $\Lambda_1\succeq 0$, and 
\begin{equation}
\begin{aligned}  \label{eq: X*intheNewSettrace}
\eta \tr (\bar{X}_{t+1})+\tr(S) & = \tr(\eta^\star _t\bar{X}_t)+ \tr(V_tQ_2\Lambda_2Q_2^\top V_t^\top) + \tr(V_tQ_1\Lambda_1 Q_1^\top V_t^\top)\\
&=\eta_t^\star\tr(\bar{X}_t) +\tr(V_tS^\star_tV_t^\top) \leq \alpha
\end{aligned} 
\end{equation} 
where the first equality above relies on the definition of $\bar{X}_{t+1}$ and that $\tr(S)= \tr(V_{t+1}SV_{t+1}^\top)$ 
(because $V_{t+1}$ has orthonormal columns), and the last inequality is due to $V_t$ having orthonormal columns and $\eta^\star _t$ and $S^\star _t$ satisfying the constraint $\eta^\star \tr(\bar{X}_t) +\tr (S^\star _t)\leq \alpha$ by construction. Thus  $$\bar F_{t+1}(y) \geq \langle-b, y\rangle + \langle X_t, \Amap^*y-C\rangle = \bar F_t(z_{t+1}) + \langle s_{t+1}, y-z_{t+1}\rangle$$ holds with $s_{t+1}= -\rho (z_{t+1}-y_t)$.

\subsection{Proof of Theorem~\ref{thm: linear convergence of Block SBM under the extra condition strict complementarity}} \label{sec: Proof of Local linear convergence of Block bundle method}
\newcontent{In this section, we first discuss the needed bounds on $T_0$ and $\rho$ for our linear convergence analysis to apply. In the following subsections, we prove the following central pair of lemmas which directly imply Theorem~\ref{thm: linear convergence of Block SBM under the extra condition strict complementarity}. Namely, under  appropriate selections of $T_0$ and $\rho$, the model $\bar{F}_{t}$ becomes quadratically close to the true penalized dual objective (see Lemma~\ref{lem: quadraticallyAccurate}). Consequently, every iteration is a descent step, linearly contracting towards optimality (see Lemma~\ref{lem: linearlyConvergent}).
\begin{lemma}\label{lem: quadraticallyAccurate}
Under the assumptions and notations in Theorem \ref{thm: linear convergence of Block SBM under the extra condition strict complementarity}, there is some $\eta>0$ (independent of $\epsilon$) such that for $t\geq T_0$, we have
\begin{equation}\label{eq: quadraticAccurateModel}
\begin{aligned}
\bar{F}_{t}(z)\leq F(z)\leq \bar{F}_{t}(z) + \frac{\eta}{2}\twonorm{z-y_t}^2 \qquad \text{ for all } z\in\mathbb{R}^\ncons.
\end{aligned}
\end{equation}
\end{lemma}

\begin{lemma}\label{lem: linearlyConvergent}
Suppose \eqref{eq: quadraticAccurateModel} holds at iterate iteration $t$, then for any $\rho\geq \eta$, Algorithm \ref{alg:bundle} with any choice of $\beta \in (0,\frac{1}{2}]$ will take a descent step satisfying
\begin{equation}\label{eq: approximationModelStep4}
\disttwonorm (y_{t+1},\ysolset)  \leq \sqrt{\frac{\rho}{2\gamma_1 +\rho}} \disttwonorm (y_t,\ysolset).
\end{equation}
\end{lemma}
}
We note that if we assume $y_t = z_t$ always, then Lemma \ref{lem: linearlyConvergent} can be derived using a combination of the proofs for prox-linear method in 
\cite{drusvyatskiy2018error,drusvyatskiy2019efficiency}. The reader might find the detailed procedure in Appendix \ref{sec: relationshipDimaAdrian}.
Our proof here is self-contained, directly employs the quadratic growth of $F$ and the quadratic closeness of the model $\bar{F}$, and shows that the descent step is taken, i.e., $y_t = z_t$. 
\subsubsection{Discussion on the bounds on $T_0$ and $\rho$} Denote the gap parameter as $\delta := \inf_{\ysol \in \ysolset}\max_{r\leq r_d} \lambda_{r}(-Z(\ysol ))-\lambda_{r+1}(-Z(\ysol))$ and
the quadratic growth 
parameter for $F$ from Lemma \ref{lem: qg} as $\gamma_1>0$.
The gap parameter $\delta$ is nonzero 
from the definition of $r_d$, the compactness of $\ysolset$, and continuity of the 
function $\max_{r\leq r_d} \lambda_{r}(Z(\cdot ))-\lambda_{r+1}(Z(\cdot))$. When the dual solution is unique, we have
$\delta =  \lambda_{r_d}(-Z(\ysol))-\lambda_{r_d+1}(-Z(\ysol))$. 
With these notations, the constant $\eta$ in Lemma \ref{lem: quadraticallyAccurate} is $\eta = 4\alpha \opnorm{\Amap^*}^2\max\left\{\frac{72\sup_{\ysol \in \ysolset}\opnorm{2Z(\ysol)}}{\delta^2},\frac{9(8\sqrt{2}+16)}{\delta}\right\}$ \newcontent{(see the proof of Lemma \ref{lem: quadraticallyAccurate} in Section \ref{sec: Proof of the quadratic accurate model} for details)}.

Let the number $T_0$ be the first descent step such that for all $t\geq T_0$, $Z(y_t)$ is $\delta/3$ close to the solution set 
$Z(\ysolset)=\{Z(\ysol)\mid \ysol \in \ysolset\}$. Using the $\bigO(1/\epsilon)$ convergence rate from Theorem \ref{thm: sublinearates} and quadratic growth from Lemma \ref{lem: qg}, this must hold for all
\begin{equation} \label{eq: T0estimate}
t\geq T_0= \bigO\left(\frac{\opnorm{\Amap^*}^2}{\delta^2\gamma_1}\right).
\end{equation} 
Indeed, for any $y\in \real^m$, by picking a solution $\ysol\in \ysolset$ closest to $y$, we have 
\begin{equation}
\begin{aligned} 
    \fronorm{Z(y)-Z(\ysol)} = \fronorm{\Amap^*(y-\ysol)}
    \leq& \opnorm{\Amap^*}\twonorm{y-\ysol} 
       = \opnorm{\Amap^*}\dist(y,\ysolset)\\
    \leq& 
    \opnorm{\Amap^*} \sqrt{\left(F(y)-F(\ysol)\right)/\gamma_1}.
\end{aligned} 
\end{equation}
From the above inequality, we see that the condition  $F(y_t) -F(\ysol)\leq \frac{\delta^2\gamma_1}{9\opnorm{\Amap^*}}$ ensures that \newcontent{$\lambda_{r_d}(Z(y_t))\geq \frac{\delta}{3}$}. Such condition is satisfied for any $y_t$ with $t\geq T_0$ by the $\bigO(1/\epsilon)$ convergence rate from Theorem \ref{thm: sublinearates} and our choice of $T_0$ in \eqref{eq: T0estimate}. 

We require the regularization parameter $\rho$ be chosen larger than $\eta$,  i.e.,
\begin{equation}\label{eq: rhoEstimate}
\rho \geq 4\alpha \opnorm{\Amap^*}^2\max\left\{\frac{72\sup_{\ysol \in \ysolset}\opnorm{2Z(\ysol)}}{\delta^2},\frac{9(8\sqrt{2}+16)}{\delta}\right\}.
\end{equation}

\subsubsection{Proof of Lemma~\ref{lem: quadraticallyAccurate}} \label{sec: Proof of the quadratic accurate model}
Without loss of generality, we have $z_t = y_t$ (that is, the previous step was a descent step). Define the $r$-th spectral plus set of a matrix $X\in\symMat^{\dm}$ with $\lambda_{r}(X)-\lambda_{r+1}(X)>0$ as  $\faceplus r(X):=\{VSV^{\top}\mid\tr(S)\leq1,S\succeq0,S\in\symMat^{r}\}$ where $V\in \RR^{\dm \times r}$ is the matrix formed by the orthonormal eigenvectors of $X$ corresponding to its 
$r$ largest eigenvalues. The following lemma, proved in Section~\ref{sec: proofOfImportantLemma}, shows these top eigenvectors give a quadratically accurate model.
\begin{lemma}\label{lem: importantLemmaQuadraticAccurateModel}
	Suppose $X\in\symMat^{\dm}$ has eigenvalues $\lambda_{r}(X)-\lambda_{r+1}(X)=\delta$ and denote the $\Lambda_{r,\dm}=\max\{|\lambda_{r+1}(X)|,|\lambda_\dm (X)| \}$. Then for any $Y\in \symMat^{\dm}$, the quantity $f_X(Y):\,=\max\{\lambda_1(Y),0\}-\max_{W\in \faceplus r (X)}\inprod{W}{Y}$ satisfies that 
	\begin{equation}
	\begin{aligned} \label{eq: quadraticAccurateModelLemma}
	0\leq f_X(Y) \leq \frac{8\fronorm{Y-X}^{2}\Lambda_{r,\dm}}{\delta^{2}}+\frac{(8\sqrt{2}+16)\fronorm{Y-X}^{2}}{\delta}.
	\end{aligned} 
	\end{equation}
\end{lemma}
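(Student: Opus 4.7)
The plan is first to reduce the inner supremum to closed form. For $W = VSV^\top$ with $S \succeq 0$ and $\tr(S) \leq 1$, $\inprod{W}{Y} = \tr(SV^\top Y V)$, whose supremum over admissible $S$ equals $\max\{\lambda_1(V^\top Y V),0\}$, attained by a rescaled top eigenvector of $V^\top Y V$ (or by $S = 0$). Hence $f_X(Y) = \max\{\lambda_1(Y),0\} - \max\{\lambda_1(V^\top Y V),0\}$, and $f_X(Y) \geq 0$ follows immediately from Cauchy interlacing: $\lambda_1(V^\top Y V) \leq \lambda_1(Y)$ since $V$ has orthonormal columns.

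For the upper bound, when $\lambda_1(Y) \leq 0$ both maxes vanish so $f_X(Y) = 0$; I therefore assume $\lambda_1(Y) > 0$ and bound $\lambda_1(Y) - \lambda_1(V^\top Y V)$. Let $v_1$ be a unit top eigenvector of $Y$ and write $v_1 = \cos\theta\,\tilde p + \sin\theta\,\tilde q$ with $\tilde p \in \mathrm{span}(V)$ and $\tilde q \perp \mathrm{span}(V)$, both unit. The variational lower bound $\lambda_1(V^\top Y V) \geq \tilde p^\top Y \tilde p$ together with expansion of $v_1^\top Y v_1$ gives
\begin{equation*}
\lambda_1(Y) - \lambda_1(V^\top Y V) \;\leq\; \sin^2\theta\,(\tilde q^\top Y \tilde q - \tilde p^\top Y \tilde p) + 2\sin\theta\cos\theta\,\tilde p^\top Y \tilde q.
\end{equation*}
The crucial simplification is that both $\mathrm{span}(V)$ and its orthogonal complement are $X$-invariant, so $\tilde p^\top X \tilde q = 0$ and thus $|\tilde p^\top Y \tilde q| = |\tilde p^\top (Y - X) \tilde q| \leq \fronorm{Y - X}$. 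The quadratic-form difference satisfies $\tilde q^\top Y \tilde q - \tilde p^\top Y \tilde p \leq 2\Lambda_{r,\dm} - \delta + 2\fronorm{Y - X}$, using $\tilde q^\top X \tilde q \leq \lambda_{r+1}(X) \leq \Lambda_{r,\dm}$ and $\tilde p^\top X \tilde p \geq \lambda_r(X) = \lambda_{r+1}(X) + \delta \geq \delta - \Lambda_{r,\dm}$.

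The last ingredient is a Davis--Kahan $\sin\theta$ estimate. Since $\lambda_1(Y) \geq \lambda_r(X) - \fronorm{Y - X}$ is separated from $\sigma(X|_{\mathrm{span}(V)^\perp}) \subset (-\infty,\lambda_{r+1}(X)]$ by at least $\delta - \fronorm{Y - X}$, the Frobenius-norm $\sin\theta$ theorem yields $\sin\theta \leq \fronorm{Y - X}/(\delta - \fronorm{Y - X})$, hence $\sin\theta \leq 2\fronorm{Y - X}/\delta$ whenever $\fronorm{Y - X} \leq \delta/2$. I then split into two cases. When $\fronorm{Y - X} \leq \delta/2$, substitution gives $\sin^2\theta \cdot 2\Lambda_{r,\dm} \leq 8\Lambda_{r,\dm}\fronorm{Y - X}^2/\delta^2$, the contribution $\sin^2\theta(2\fronorm{Y - X} - \delta)$ is non-positive, and $2\sin\theta\,\fronorm{Y - X} \leq 4\fronorm{Y - X}^2/\delta$, all fitting inside the target bound. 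When $\fronorm{Y - X} > \delta/2$, the crude estimates $\sin\theta \leq 1$ and $2\sin\theta\cos\theta \leq 1$ yield $\lambda_1(Y) - \lambda_1(V^\top Y V) \leq 2\Lambda_{r,\dm} - \delta + 3\fronorm{Y - X}$, which is absorbed by the claimed bound since $\fronorm{Y - X}^2/\delta > \fronorm{Y - X}/2$ forces $8\Lambda_{r,\dm}\fronorm{Y - X}^2/\delta^2 \geq 2\Lambda_{r,\dm}$ and $(8\sqrt{2} + 16)\fronorm{Y - X}^2/\delta > 3\fronorm{Y - X}$.

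The main obstacle is precisely this case analysis: the Davis--Kahan bound degenerates as $\fronorm{Y - X}$ approaches $\delta$, so one must verify in the high-perturbation regime that the naive eigenvalue comparison still fits inside the quadratic-in-$\fronorm{Y - X}/\delta$ target. This balancing of regimes is what forces the otherwise unusual constant $8\sqrt{2} + 16$ in the stated bound.
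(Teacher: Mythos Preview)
Your argument is correct and follows a genuinely different route from the paper's. The paper never evaluates the inner maximum in closed form; instead it picks the specific feasible point $W=VV^\top vv^\top VV^\top$ and splits $f_X(Y)\le \inprod{vv^\top-W}{Y-X}+\inprod{vv^\top-W}{X}$, then controls both pieces through the full top-$r$ eigenspace perturbation $E=V'-VO^\star$ (with $V'$ the top-$r$ eigenvectors of $Y$ and $O^\star$ the optimal orthogonal alignment), invoking the Frobenius Davis--Kahan bound $\fronorm{E}\le 2\sqrt{2}\fronorm{Y-X}/\delta$ from \cite{yu2015useful}. This produces an intermediate cubic term $16\fronorm{Y-X}^3/\delta^2$, and the far regime $\fronorm{Y-X}>\delta$ is handled separately via a global Lipschitz estimate $|f_X(Y)|\le 2\opnorm{X-Y}$.

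Your approach is lighter: by recognizing $\max_{W\in\faceplus r(X)}\inprod{W}{Y}=\max\{\lambda_1(V^\top YV),0\}$ you only ever need the angle of the \emph{single} top eigenvector $v_1$ of $Y$ with $\mathrm{span}(V)$, so the full $r$-dimensional subspace $V'$ of $Y$, the optimal rotation, and the nuclear-norm gymnastics on $vv^\top-W$ never appear. Your $\sin\theta$ bound is a one-eigenvector Davis--Kahan with separation $\lambda_1(Y)-\lambda_{r+1}(X)\ge \delta-\fronorm{Y-X}$, and in the near regime you actually obtain the sharper constant $4$ in place of $8\sqrt{2}+16$. The trade-off is that your far-regime argument ($\sin\theta\le 1$) relies on the specific sizes of the stated constants, whereas the paper's Lipschitz bound is constant-agnostic. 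Both case splits serve the same purpose---patching the degeneration of Davis--Kahan as $\fronorm{Y-X}\to\delta$---but at different thresholds and with different tools.
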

This lemma shows that the function $\max_{W\in \faceplus r (X)}\inprod{W}{Y}$ has captured the nonsmooth part of the $\max\{\lambda_1(Y),0\}$ and is accurate to $\max\{\lambda_1(Y),0\}$ up to second order. This result is key to establishing~\eqref{eq: quadraticAccurateModel} for all $t\geq T_0$ in the following two sections (first assuming a unique dual solution for ease and then in general).

\myparagraph{Unique solution case} First suppose the dual solution $\ysol$ is unique and the corresponding dual slack is denoted as $\zsol$. In this case, our choice of $T_0$ ensures $y_t$ is sufficiently close to $\ysol$ such that $\opnorm{Z(y_t)-\zsol}\leq \frac{\delta}{3}$ where $\delta$ is the $r_d$-th eigengap of $-\zsol$. 
Then from Weyl's inequality, we know the $r_d-$th eigengap of $-Z(y_t)$, $\lambda_{r_d}(-Z(y_t))-\lambda_{r_d+1}(-Z(y_t))$, is at least $\frac{\delta}{3}$, and $\opnorm{Z(y_t)}\leq 
2\opnorm{\zsol}$.

Let $V\in \RR^{\dm \times r_d}$ denote the matrix formed by the eigenvectors corresponding to the $r_d$ largest eigenvalue of $-Z(y_t)$. We find that 
\begin{equation*}
\begin{aligned} 
F(y)-\bar{F}_t(y)& =\alpha \max\{\lambda_{\max}(-Z(y)),0\} -  \max_{\eta \alpha +\tr(S)\leq \alpha , \eta \geq 0, S\in \symMat_+^{\rcurrent}}\inprod{\eta\bar{X}+V_tSV_t^\top}{-Z(y)} \\ 
&\leq \alpha \max\{\lambda_{\max}(-Z(y)),0\} -  \max_{\tr(S)\leq \alpha, S\in \symMat_+^{r_d}}\inprod{VSV^\top}{-Z(y)} \\
& = \alpha \left( \max\{\lambda_{\max}(-Z(y)),0\} - \max_{W\in \faceplus {r_d} (-Z(y_t))}\inprod{W}{-Z(y)} \right) \\
& \leq  \alpha\left( \frac{72\fronorm{Z(y_t)-Z(y)}^{2}\opnorm{2\zsol}}{\delta^{2}}+\frac{9(8\sqrt{2}+16)\fronorm{Z(y_t)-Z(y)}^{2}}{\delta} \right)\\
&\leq 2\alpha \opnorm{\Amap^*}^2\max\{\frac{72\opnorm{2\zsol}}{\delta^2},\frac{9(8\sqrt{2}+16)}{\delta}\}\twonorm{y-y_t}^2,
\end{aligned} 
\end{equation*}
where the first inequality restricts the spectral set considered since $\rcurrent \geq r_d$ by assumption and the second inequality applies Lemma \ref{lem: importantLemmaQuadraticAccurateModel}.
Combining the fact that $\bar{F}_t$ lower bounds $F(y)$ by construction, we see the model $\bar{F}_t$ is indeed quadratically accurate with $\eta =  4\alpha \opnorm{\Amap^*}^2\max\{\frac{72\opnorm{2\zsol}}{\delta^2},\frac{9(8\sqrt{2}+16)}{\delta}\}$ in \eqref{eq: quadraticAccurateModel}.

\myparagraph{Multiple dual solutions case} Now we generalize the above reasoning to when $\mathcal{Y}_\star$ contains multiple points. Recall we defined $\delta$ as
\begin{equation}
\begin{aligned} \label{eq: definitionOfdelta}
\delta = \inf_{\ysol \in \ysolset}\max_{r\leq r_d} \lambda_{r}(-Z(\ysol))-\lambda_{r+1}(-Z(\ysol))
\end{aligned} 
\end{equation}
which is nonzero from the definition of $r_d$, the compactness of $\ysolset$, and continuity of the 
function $\max_{r\leq r_d} \lambda_{r}(Z(\cdot))-\lambda_{r+1}(Z(\cdot))$. Hence if $\dist(Z(y_t),Z(\ysolset))$ is 
less than a third of $\delta$, then there is an $r$ and $\ysol\in \ysolset$, such that $-Z(\ysol)$ is no more than 
$\frac{\delta}{3}$ away from $Z(y_t)$, and has $\lambda_{r}(-Z(\ysol))-\lambda_{r+1}(-Z(\ysol))\geq \delta$. 
Hence, we can repeat previous argument for the case of unique dual solution and replace $r_d$ and $\opnorm{\zsol}$ by $r$ and $2\sup_{\ysol \in \ysolset}\opnorm{Z(\ysol)}$ respectively. Thus the model $\bar{F}_t$ is quadratically accurate in \eqref{eq: quadraticAccurateModel} with 
$
\eta =4\alpha \opnorm{\Amap^*}^2\max\{\frac{72\sup_{\ysol \in \ysolset}\opnorm{2Z(\ysol)}}{\delta^2},
$
$
\frac{9(8\sqrt{2}+16)}{\delta}\}
$ 
as stated in \eqref{eq: rhoEstimate}.

\subsubsection{Proof of Lemma~\ref{lem: linearlyConvergent}}\label{sec: Linear convergence under a quadratic accurate model}
Suppose~\eqref{eq: quadraticAccurateModel} is satisfied for some $\eta>0$ at iteration $t$. Without loss of generality, $\eta=\rho$ since we require $\eta\leq \rho$. We first show $(\rpast,\rcurrent)$-SpecBM must take a descent step for $\beta\leq \frac{1}{2}$. We know the minimizer 
$z_{t}^\star$ of $\bar{F}_{t}(z) + \frac{\rho}{2}\twonorm{z-y_t}^2$ satisfies that for any $z\in \RR^\ncons$
\begin{equation}\label{eq: threePointInequality}
\begin{aligned} 
\bar{F}_t(z_t^\star) +\frac{\rho}{2}\twonorm{z_t^\star-y_t}^2 + \frac{\rho}{2}\twonorm{z^\star_t-z}^2\leq \bar{F}_t(z) + \frac{\rho}{2}\twonorm{z- y_t}^2,
\end{aligned} 
\end{equation} 
since $\bar{F}_t(z)+\frac{\rho}{2}\twonorm{z-y_t}^2$ is $\rho$-strongly convex. Setting $z=y_t$ and \eqref{eq: quadraticAccurateModel} shows
\begin{equation}\label{eq: approximationModelStep1}
F(y_t)-\bar{F}_t(z_t^\star)\geq \rho \twonorm{z_t^\star-y_t}^2\geq 0,\quad \text{and}\quad \frac{\rho}{2}\twonorm{z_t^\star-y_t}^2 \leq F(y_t)-F(z_{t}^\star ).
\end{equation}
Using sequentially that $\beta\leq 1/2$, the quadratic bound~\eqref{eq: quadraticAccurateModel} and then~\eqref{eq: approximationModelStep1} shows
\begin{equation}\label{eq: descentStepASBMLocalLinearConvergence}
\begin{aligned}
\beta \left(F(y_t)-\bar{F}_t(z_t^\star)\right)\leq  \frac{1}{2}\left(F(y_t)-\bar{F}_t(z_t^\star)\right) 
&\leq\frac{1}{2}\left(F(y_t)-F(z_t^\star)\right)+\frac{\rho}{4}\twonorm{y_t-z_t^\star}^2\\
&\leq  F(y_t)-F(z_{t}^\star).
\end{aligned} 
\end{equation}
Hence, we see the method will indeed take a descent step and $y_{t+1}=z_t^\star$ is in the sublevel set defined by $\{y\mid F(y)\leq F(y_0)\}$. 

Now we show this descent step contracts the distance to $\ysolset$, yielding linear convergence. Considering $z=\ysol$ for any $\ysol\in \ysolset$ in \eqref{eq: threePointInequality} and using 
\eqref{eq: quadraticAccurateModel} ensures 
\begin{equation}\label{eq: approximationModelStep2}
\begin{aligned} 
F(z^\star_t)\leq F(\ysol) +\frac{\rho}{2}\left(\twonorm{\ysol -y_t}^2-\twonorm{z_t^\star-\ysol}^2\right). 
\end{aligned} 
\end{equation} 
Now recall the quadratic growth of $F$  (derived from Lemma \ref{lem: qg}) that there is a $\gamma_1>0$ such that for all $z\in \{y\mid F(y)\leq F(y_0)\}$, 
\[
F(z)-F(\ysol)\geq \gamma_1 \disttwonorm^2 (z,\ysolset).
\]
Hence combining this with \eqref{eq: approximationModelStep2}, we find that 
\begin{equation}\label{eq: approximationModelStep3}
\begin{aligned} 
\gamma_1 \disttwonorm^2 (z_t^\star,\ysolset) &\leq \frac{\rho}{2}\left(\twonorm{\ysol -y_t}^2-\twonorm{z_t^\star-\ysol}^2\right) \\ 
\implies  \left(\gamma_1 +\frac{\rho}{2}\right)\disttwonorm^2 (z_t^\star,\ysolset)  & \leq \frac{\rho}{2}\twonorm{\ysol -y_t}^2 \\ 
\implies \disttwonorm^2 (z^\star_t, \ysolset) & \leq \frac{\rho}{2\gamma_1 +\rho} \twonorm{\ysol-y_t}^2. 
\end{aligned} 
\end{equation}
Setting $\ysol$ to be the point in $\ysolset$ nearest to $y_t$ shows $y_{t+1}=z_t^\star$ satisfies the recurrence
\begin{equation*}
\disttwonorm (y_{t+1},\ysolset)  \leq \sqrt{\frac{\rho}{2\gamma_1 +\rho}} \disttwonorm (y_t,\ysolset), 
\end{equation*}
ensuring convergence occurs geometrically, contracting by a factor of $\sqrt{\frac{\rho}{2\gamma_1 +\rho}} <1$.

\section{Proof of Lemma \ref{lem: importantLemmaQuadraticAccurateModel}}\label{sec: proofOfImportantLemma}
Lastly, we provide a proof of Lemma \ref{lem: importantLemmaQuadraticAccurateModel}. In addition
to the Frobenius norm bound, we also provide an operator two norm bound \eqref{eq:qudraticaccuracyopnorm}. 

Recall the assumption that $\lambda_{r}(X)-\lambda_{r+1}(X)=\delta>0$ for
some $\delta>0$. Let $V\in\real^{n\times r}$ be an orthonormal matrix
formed by the $r$ eigenvectors corresponding to the top $r$ eigenvalues.
%Hence if $V$ is a representation, then any $VO$ with $O\in\real^{r\times r}$
%orthonormal is also a representation. 
Recall $r$-th spectral plus set $\faceplus r(X):=\{VSV^{\top}\mid\tr(S)\leq1,S\succeq0,S\in\symMat^{r}\}.$
Note that for any orthonormal $O\in \real^{r\times r}$, replacing $V$ by $VO$ produces the same spectral set $\faceplus r(X)$. 

For any $Y\in\symMat^{\dm},$ since $\max\{\lambda_{1}(Y),0\}=\max_{W\succeq0,\tr(W)\leq1,}\inprod WY,$
we see the following inequality always holds as $\{W|W\succeq0,\tr(W)\leq1\}\supset\faceplus r(X)$:
\begin{equation}
\max\{\lambda_{1}(Y),0\}  \geq\max_{W\in\faceplus r(X)}\inprod WY.
\end{equation}
Define the error $f_X(Y)$ as 
\begin{equation}
f_{X}(Y)  =\lambda_{1}(X)-\max_{W\in\faceplus r(X)}\inprod WY.
\end{equation}
We always have $f_{X}(Y)\geq0$ as previously argued. If $\lambda_{1}(Y)<0$,
then $\max\{\lambda_{1}(Y),0\}=0$ and hence $Y \preccurlyeq 0$. Thus
the approximation $\max_{W\in\faceplus r(X)}\inprod WY=0$ as well.
Hence we may only consider the case $\lambda_{1}(Y)>0$ in the following.
Let $v$ be the eigenvector with two norm $\twonorm v=1$ corresponding
to the largest eigenvalue $\lambda_{1}(Y)$, then 
\begin{align*}
f_{X}(Y) & =\lambda_{1}(Y)-\max_{W\in\faceplus r(X)}\inprod WY  =\min_{W\in\faceplus r(X)}\inprod{vv^{\top}-W}Y\\
& =\min_{W\in\faceplus r(X)}\underbrace{\inprod{vv^{\top}-W}{Y-X}}_{R_{1}}+\underbrace{\inprod{vv^{\top}-W}X}_{R_{2}}.
\end{align*}
To analyze $R_{1}$ and $R_{2}$, we define some notation first. Denote
$V'\in\mathbb{\mathbb{R}}^{\dm\times r}$ to be the orthonormal matrix
formed by the eigenvectors corresponding to the top $r$ eigenvalues
of $Y$. Moreoever, let $v$ denote the first column of $V'$. Also denote
$F\in\real^{n\times(n-r)}$ to be an orthonormal matrix formed by the
rest eigenvectors of $X.$ So the eigenvalue decomposition of $X$
is $X=V\Lambda_{1}V^{\top}+F\Lambda_{2}F^{\top},$ for some diagonal
$\Lambda_{1}\in\symMat^{r}$ and $\Lambda_{2}\in\symMat^{(\dm-r)\times(n-r)}$. Let the 
matrix $O^{\star}\in\real^{r\times r}:O^{\star}\in\arg\min_{OO^\top =I}\fronorm{VO-V'}$.
Below, we set $V_O = VO^{\star}$.

We bound the $R_{2}$ term first. We may choose $W=VV^{\top}vv^{\top}VV^{\top}$
here. With such a choice, $R_{2}$ equals the following:
\begin{align*}
R_{2} & =\inprod{vv^{\top}-W}X  =\inprod{vv^{\top}}X-\inprod WX\\
& \overset{(a)}{=}\inprod{vv^{\top}}{V\Lambda_{1}V^{\top}+F\Lambda_{2}F^{\top}}-\inprod{VV^{\top}vv^{\top}VV^{\top}}{V\Lambda_{1}V^{\top}+F\Lambda_{2}F^{\top}}\\
& \overset{(b)}{=}\inprod{vv^{\top}}{V\Lambda_{1}V^{\top}}+\inprod{vv^{\top}}{F\Lambda_{2}F^{\top}}-\inprod{vv^{\top}}{V\Lambda_{1}V^{\top}}\\
& \overset{(c)}{=}\inprod{V'e_{1}(V'e_{1})^{\top}}{F\Lambda_{2}F^{\top}}.
\end{align*}
Here we use the eigenvalue decomposition of $X$ in step $(a)$. Step $(b)$ uses the fact that $V$ has orthonormal columns and that
$V^{\top}F=0$ as the columns are orthonormal. Step $(c)$ uses the
fact that $v$ is the first column of $V'$.

Let
the error between $V_O$ and $V'$ be given by $E=V'-V_O$ and let $e_{1}\in\real^{r}$ be the vector with first entry $1$ and all other entries $0$. 
Using these, we upper bound $R_{2}=\inprod{V'e_{1}(e_{1}V')^{\top}}{F\Lambda_{2}F^{\top}}$ as
\begin{equation} \label{eq:T2maxlambda}
\begin{aligned}
R_{2} 
& =\inprod{(V_O+E)e_{1}e_{1}^{\top}(V_O+E)^{\top}}{F\Lambda_{2}F^{\top}}\\%\nonumber \\
& \overset{(a)}{=}\inprod{Ee_{1}e_{1}^{\top}E^{\top}}{F\Lambda_{2}F^{\top}} %\nonumber 
\\
&\overset{(b)}{\leq}\nucnorm{Ee_{1}e_{1}^{\top}E^{\top}}\opnorm{F\Lambda_{2}F^{\top}}
\\
& \overset{(c)}{=}\opnorm{Ee_{1}e_{1}^{\top}E^{\top}}\opnorm{F\Lambda_{2}F^{\top}}%\nonumber 
\\
& \overset{(d)}{\leq}\opnorm{Ee_{1}}^{2}\opnorm{\Lambda_{2}}%\nonumber 
\\
& \overset{(e)}{\leq}\opnorm E^{2}\opnorm{\Lambda}.%\nonumber 
\end{aligned}
\end{equation} 
Here we use the fact $V_O^{\top}F=0$ in step $(a)$. Step $(b)$ is
due to the H\"{o}lder's inequality. Step $(c)$ uses the fact that for
rank $1$ matrix, the Frobenius norm is the same as its operator norm.
Step $(d)$ uses the submultiplicity of operator two norm. The last
step $(e)$ uses the fact operator norm of $e_{1}$ is $1$. 

Next we bound $R_{1}$. Considering $W=VV^{\top}vv^{\top}VV^{\top}=V_OV_O^\top vv^\top V_OV_O^\top$,
the difference $vv^{\top}-W$ is 
\begin{align*}
vv^{\top}-W & =V'e_{1}(V'e_{1})^{\top}-V_OV_O^{\top}vv^{\top}V_OV_O^{\top}\\
& =V'e_{1}(V'e_{1})^{\top}-V_OV_O^{\top}V'e_{1}(V'e_{1})^{\top}V_OV_O^{\top}\\
& =(V_O+E)e_{1}e_{1}^{\top}(V_O+E)^{\top}-V_OV_O^{\top}(V_O+E)e_{1}e_{1}^{\top}(V_O+E)^{\top}V_OV_O^{\top}\\
& =Ee_{1}e_{1}^{\top}V_O^{\top}+V_Oe_{1}e_{1}^{\top}E^{\top}+Ee_{1}e_{1}E^{\top}\\
& -V_OV_O^{\top}Ee_{1}e_{1}^{\top}V_O^{\top}-V_Oe_{1}e_{1}^{\top}E^{\top}V_OV_O^{\top}-V_OV_O^{\top}Ee_{1}e_{1}^{\top}EV_OV_O^{\top}.
\end{align*}
Hence, using the fact the nuclear norm of rank one matrix is the same
as operator norm, the nuclear norm of $vv^{\top}-W$ is bounded by
\begin{align*}
\nucnorm{vv^{\top}-W} 
 \leq& \opnorm{Ee_{1}e_{1}^{\top}V^{\top}_O}+\opnorm{V_Oe_{1}e_{1}^{\top}E^{\top}}
 +\opnorm{Ee_{1}e_{1}E^{\top}}\\
 &+\opnorm{V_OV_O^{\top}Ee_{1}e_{1}^{\top}V_O^{\top}}
 +\opnorm{V_Oe_{1}e_{1}^{\top}E^{\top}V_OV_O^{\top}}\\
 &+\opnorm{V_OV_O^{\top}Ee_{1}e_{1}^{\top}EV_OV_O^{\top}}\\
 \overset{(a)}{\leq}&4\opnorm E+2\opnorm E^{2}.
\end{align*}
Here in step $(a)$, we use the fact that $\opnorm{e_{1}e_{1}^{\top}}\leq1$
and $\opnorm V\leq1.$ Using H\"{o}lder's inequality again, the first term $R_{1}$ is bounded by
\begin{equation}\label{eq: T1maxlambda}
R_{1} 
%& 
=\inprod{vv^{\top}-W}{Y-X} %\\
% & 
\leq\nucnorm{vv^{\top}-W}\opnorm{Y-X}
%\\
%& 
\leq\left(4\opnorm E+2\opnorm E^{2}\right)\opnorm{Y-X}.
\end{equation}

Now combining (\ref{eq: T1maxlambda}) and (\ref{eq:T2maxlambda}),
we find that $f_{X}(Y)$ is upper bounded by 
\begin{align*}
f_{X}(Y) & \leq\opnorm{\Lambda_{2}}\opnorm E^{2}+\left(4\opnorm E+2\opnorm E^{2}\right)\opnorm{Y-X}.
\end{align*}

Let us consider two cases:
\begin{enumerate}
	\item First consider the Frobenius norm. The Frobenius bound \cite[Theorem 2]{yu2015useful} applied to $E$ ensures
	that 
	\begin{align*}
	\fronorm E & \leq\frac{2\sqrt{2}\fronorm{Y-X}}{\delta}.
	\end{align*}
	Hence in this case, we have for all $Y\in\symMat^{\dm}$
	\begin{align*}
	f_{X}(Y) & \leq\frac{8\fronorm{Y-X}^{2}\opnorm{\Lambda_{2}}}{\delta^{2}}+\frac{8\sqrt{2}\fronorm{Y-X}^{2}}{\delta}+\frac{16\fronorm{Y-X}^{3}}{\delta^{2}}.
	\end{align*}
	\item Second consider the operator norm. Using \cite[Theorem 2]{yu2015useful} again, we have the operator norm of $E$ bounded
	by 
	\begin{align*}
	\opnorm E & \leq\frac{2\sqrt{2}\sqrt{r}\opnorm{Y-X}}{\delta}.
	\end{align*}
	In this case, the function $f_{X}(Y)$ is upper bounded by 
	\begin{align}
	f_{X}(Y) & \leq\frac{8r\opnorm{Y-X}^{2}\opnorm{\Lambda_{2}}}{\delta^{2}}+\frac{8\sqrt{2}\sqrt{r}\opnorm{Y-X}^{2}}{\delta}+\frac{16r\opnorm{Y-X}^{3}}{\delta^{2}}.\label{eq:fxYopeartornormboundglobalwithr}
	\end{align}
	If $\opnorm{Y-X}\leq\delta$, then using it for the term $\frac{16r\opnorm{Y-X}^{3}}{\delta^{2}}$,
	we have 
	\begin{align}
	f_{X}(Y) & \leq\frac{8r\opnorm{Y-X}^{2}\opnorm{\Lambda_{2}}}{\delta^{2}}+\frac{(8\sqrt{r}+16r)\opnorm{Y-X}^{2}}{\delta}.\label{eq:fxYoperatornormboundwithr}
	\end{align}
\end{enumerate}
Still, we have not reached a globally quadratically accurate model. Let us show that the function $f_{X}(Y)$ is always bounded by a linear
difference. Note that we have
$\max_{W\in\faceplus r(X)}\inprod WY = \max\{\lambda(V^\top YV),0\}$.
We decompose $f_{X}(Y)$ into two terms: 
\begin{align*}
f_{X}(Y) & =\max\{\lambda_{1}(Y),0\}-\max\{\lambda_{1}(V^{\top}YV),0\}\\
& =\underbrace{\max\{\lambda_{1}(Y),0\}-\max\{\lambda_{1}(X),0\}}_{R_{1}}+\underbrace{\max\{\lambda_{1}(X),0\}-\max\{\lambda_{1}(V^{\top}YV),0\}}_{R_{2}}.
\end{align*}
For the term $R_{1}$, we note the function $\max\{x,0\}$ for any
$x\in\real$ is $1$-Lipschitz with respect to the norm
$|x|.$ Thus the term $R_{1}$ is bounded by 
\begin{equation*}
|R_{1}| %& 
\leq|\lambda_{1}(Y)-\lambda_{1}(X)|
%& 
\leq\opnorm{Y-X}.
\end{equation*}

For the second term $R_{2}$, we note that $\lambda_{1}(X)=\lambda_{1}(V^{\top}XV$)
because of the definition of $V$. Hence, using the same reasoning,
we have 
\begin{equation*}
|R_{2}| 
%& 
\leq\opnorm{V^{\top}XV-V^{\top}YV}
%& 
\leq\opnorm{X-Y}
\end{equation*}
where the last line is due to submultiplicity of operator two norm.
Hence, we see the error function $f_{X}(Y)$ is always bounded by
\begin{align*}
|f_{X}(Y)| & \leq2\opnorm{X-Y}.
\end{align*}

The inequality (\ref{eq:fxYoperatornormboundwithr}) tells us that
when $\opnorm{X-Y}\leq\delta$, we have $f_{X}(Y)\leq\frac{8r\opnorm{Y-X}^{2}\opnorm{\Lambda_{2}}}{\delta^{2}}+\frac{(8\sqrt{2r}+16r)\opnorm{Y-X}^{2}}{\delta}.$
Now if $\opnorm{X-Y}\geq\delta$, then it follows that $\frac{2\opnorm{X-Y}^{2}}{\delta}\geq2\opnorm{X-Y}$.
Hence, the model $\max_{W\in\faceplus r(X)}\inprod WY$ is always
quadratically accurate: for all $Y\in\symMat^{\dm},$
\begin{align}
\text{0\ensuremath{\leq f_{X}\left(Y\right)\leq}} & \min\left\{ \frac{8r\opnorm{Y-X}^{2}\opnorm{\Lambda_{2}}}{\delta^{2}}+\frac{(8\sqrt{2r}+16r)\opnorm{Y-X}^{2}}{\delta},\frac{2\opnorm{X-Y}^{2}}{\delta}\right\} .\label{eq:qudraticaccuracyopnorm}
\end{align}
The same argument applies to the Frobenius norm case, and we reach 
\begin{align}\label{eq: qudraticaccuracyFrobeniusNorm}
f_{X}(Y) & \leq\frac{8\fronorm{Y-X}^{2}\opnorm{\Lambda_{2}}}{\delta^{2}}+\frac{(8\sqrt{2}+16)\fronorm{Y-X}^{2}}{\delta}.
\end{align}
	\section{Numerics}\label{sec: numerics}

%numbers for MaxCut A-SBM 
%0.232404049961349
%-0.0153018214022142	0.190755465949367
%-0.0212468354106974	0.225890277132714
%-0.0164010431759834

% Fvalue max cut A-SBM
%-7.69652635393194e-06	%-1.78885364503803e-07	%-6.77635344383042e-08

%numbers for MaxCut FC-SBM
%0.000223966094633296
%4.54642627899144e-06	0.000104925408443710
%6.86237903202185e-07	8.96283277689624e-05
%8.14464089750493e-09

% Fvalue maxcut A-SBM 
%-0.0207883155126772	%-0.0126121352107433	%-0.0171603737732200

%numbers for Matrix Completion FC-SBM
%0.0238329862725948
%0.0194950117443868	0.0208609302305128
%0.00848505642090731	0.000558249234173460
%2.72875928855380e-05
%Fvalue FC-SBM 
%0.0883658939941721	%0.0486846150709021	%6.85340066062660e-06

%numbers for Matrix Completion A-SBM
%0.0102010905128411
%0.0107653205566404
%8.22025847080877e-05
%-3.84338669997912e-08
%0.000143745277960734
%1.88033275496006e-06

%Fvalue A-SBM 
%0.0777486046533844	%8.01438092973209e-07	%2.79997427969647e-06
In this section, we first present numerical experiments demonstrating (i) sublinear convergence generically for the spectral bundle method under a range of configurations and (ii) once the conditions listed in Theorem~\ref{thm: linear convergence of Block SBM under the extra condition strict complementarity} hold, convergence speeds up (to linear convergence). Subsequently, we show substantial speedups in both time and space complexity are achievable utilizing a sketching technique, enabling the spectral method to effectively solve much larger problem instances.

\subsection{Max-Cut and Matrix Completion Experiments}\label{sec: maxcutmatrixcompenumerics}

We consider two common SDP problems, matrix completion and max-cut, whose formulations are stated in Table~\ref{tb: SpecBundleMethodperformance}. As discussed in Section \ref{sec: discussion On low-rankness}, both of these families of problems typically have low-rank primal optimal solutions.

We consider the following instances of these semidefinite programs: For max-cut, we take $L$ as the Laplacian of the graph G24 in \cite{Gset} with $2000$ vertices. 
For matrix completion, $\Omega$ denotes the set of indices of the observed entries of the underlying rank $3$ matrix
$\trux \in \RR^{1000\times 1000}$. Here $\trux = WW^\top$ where $W\in \RR^{1000\times 3}$ with each 
entry following the Rademacher distribution. Each entry of $\trux$ is observed with 
probability $p=0.04$. Both problems have decision variable size $2000\times 2000$. 

For both problems, we initialize with $X_0,y_0,z_0$ all zero. For max-cut, we set $\alpha =2\dm$, $\rho=0.5,\beta = 0.25$ and run for $200$ iterations, and for matrix completion, we set $\alpha = 4 \nucnorm{\trux},\rho =5,\beta= 0.25$ and run for $100$ iterations. The subproblem \eqref{eq: subproblemAMSpectralbundleMethod} is solved via Mosek \cite{mosek2010mosek}. Likewise, the optimal value $\pval$ and primal solution $\xsol$ for max-cut is obtained through Mosek \cite{mosek2010mosek}, whereas for matrix completion, we set 
$\pval = 2\nucnorm{\trux}$ and $\xsol = \begin{bmatrix}
\trux & \trux \\
\trux & \trux 
\end{bmatrix}$ for matrix completion. Such a choice of $\xsol$ indeed solves matrix completion SDP 
with high probability \cite{candes2009exact}. 
Let the rank of the optimal solution be $\rsol = \rank(\xsol)$, which is $18$ for max-cut and 
$3$ for matrix completion.

We consider two configurations of the parameters $\rcurrent$ and $\rpast$: (i) $\rcurrent =1$ while $\rpast = \rsol-2$, $\rsol-1$, and $\rsol$, and (ii) $\rpast =0$, $\rcurrent = \rsol-1$, $\rsol$, and $\rsol+1$. In the first setting, we primarily accumulate past information with $\rpast$ on the order of the rank of the primal optimal solution, while computing only one new eigenvector per iteration. In the second setting, the method retains no additional past information (beyond the aggregation $\bar X_t$), relying primarily on the current $r_c$ eigenvectors.

\myparagraph{Experiment Results} Table \ref{tb: SpecBundleMethodperformance} shows the accuracy of the last iterates in terms of primal and dual optimality and feasibility. The dual optimality (dual opt.), primal optimality (primal opt.), and primal feasibility (primal feas.) 
are defined as $\frac{F(y)-\dval}{\abs{\dval}}$, $\abs{\frac{\inprod{C}{X}-\pval}{\pval}}$, and $\frac{\norm{\Amap X-b}}{\norm{b}}$ respectively. We find that primal feasibility tends to be worse than dual optimality by one or two orders of magnitude, while primal optimality is usually of the same order. Slower convergence in primal feasibility aligns with expectations based on our lemmas in Section~\ref{sec: analtical conditon}, as primal feasibility $\|\Amap X-b\|$ is only guaranteed to be on the order of the square root of dual optimality.

%\begin{comment}
\begin{table}
\begin{center}
	{\small\begin{tabular}{|cllll|}
		\hline 
		Problem & $(\rpast,\rcurrent)$  & Dual Opt. & Primal Opt. & Primal Feas.\\ 
		\hline 
		\textbf{Matrix Completion}                                 	 & $(1,1)$  & $0.08836$   & 0.01950 &  0.02383 \\ 
		%	\hline  
		\multirow{5}{*}{$
			\displaystyle
			\begin{aligned}
			&{\text{max}}
			& & -\inprod{I}{W_1}-\inprod{I}{W_2} \\
			& \text{s.t.}
			& &  X_{ij} = \trux_{ij},\, (i,j)\in \Omega \\
			&  & &
			\begin{bmatrix}
			W_1 & X \\ X^\top & W_2
			\end{bmatrix}\succeq 0.
			\end{aligned}
			$}                                                        &   $(0,2)$ & 0.07774 & 0.01077 &  $0.01020$ \\ 
		& $(2,1)$ & 0.04868 & $0.008485$ & 0.02086  \\ 
		&  $(0,3)$ & $8.014$e-7   & $3.843$e-8 & $8.220$e-5 \\ 
		%	\hline 
		& $(3,1)$ &  $6.853$e-6 & $2.729$e-5  & $5.582$e-4  \\ 
		& $(0,4)$  & $2.800$e-6 & $1.880$e-6 &  $1.437$e-4\\ 
		\hline \hline 
		\textbf{Max-cut}                                 	& $(16,1)$ & $0.02078$ & $0.01530$ & 0.2324 \\ 
		%	\hline  
		\multirow{5}{*}{$
			\displaystyle
			\begin{aligned}
			&\mbox{max} & & \inprod{L}{X}\\
			&\mbox{s.t.} & & \diag(X) =  \ones \\
			& & & X\succeq 0
			\end{aligned}
			$
		}
		&  $(0,17)$ &  $7.697$e-6 &  $4.546$e-6 & $2.239$e-4 \\ 
		& $(17,1)$ & $0.01261$ & $0.02124$ & 0.1907 \\ 
		&  $(0,18)$ & $1.789$e-7 & $6.862$e-7& $1.049$e-4  \\ 
		%	\hline 
		& $(18,1)$ & $0.01716$ & $0.01640$ & 0.2258\\ 
		& $(0,19)$ &  $6.776$e-8 & $8.144$e-9  & $8.963$e-5\\ 
		\hline 
	\end{tabular} }
	\caption{The final accuracy of $y_t$ and $X_t$ reached by the spectral bundle method for matrix completion and max-cut problem under different configurations of $(\rpast,\rcurrent)$.}\label{tb: SpecBundleMethodperformance}
\end{center}
\end{table} 

Figure~\ref{fig: penalizedObjectiveValue} shows the evolution of dual objective value $F$ in \eqref{eq: penaltySDP} as each method runs. The spectral bundle methods performance across these configurations and problem settings tends to agree with our theories predictions. As shown in Figure \ref{fig:figure Matrix Completion} and Figure \ref{fig:figure Max-cut}, in general, when $\rcurrent <\rsol$, the spectral bundle method converges sublinearly (with the exception $(3,1)$-SpecBM for matrix completion). Once $\rcurrent\geq \rsol$, the method converges quickly for both max-cut and matrix completion
as expected from our Theorem~\ref{thm: linear convergence of Block SBM under the extra condition strict complementarity}. 
As shown in Table \ref{tb: SpecBundleMethodperformance},  whenever $\rcurrent \geq \rsol$, the method solves both problems in terms of dual optimality to moderately high accuracy ($\sim 10^{-8}$). Most instances with $\rcurrent <\rsol$ only achieved a moderate accuracy ($\sim 10^{-2}$). We suspect the limitation to $10^{-8}$ accuracy is due to the inaccuracy in the eigenvalue computations or the subproblem solver for \eqref{eq: subproblemAMSpectralbundleMethod}.

\begin{figure}[t]
	\begin{subfigure}[(a)]{.48\textwidth}
		\centering
		\includegraphics[width=1\linewidth, height= 0.25\textheight]{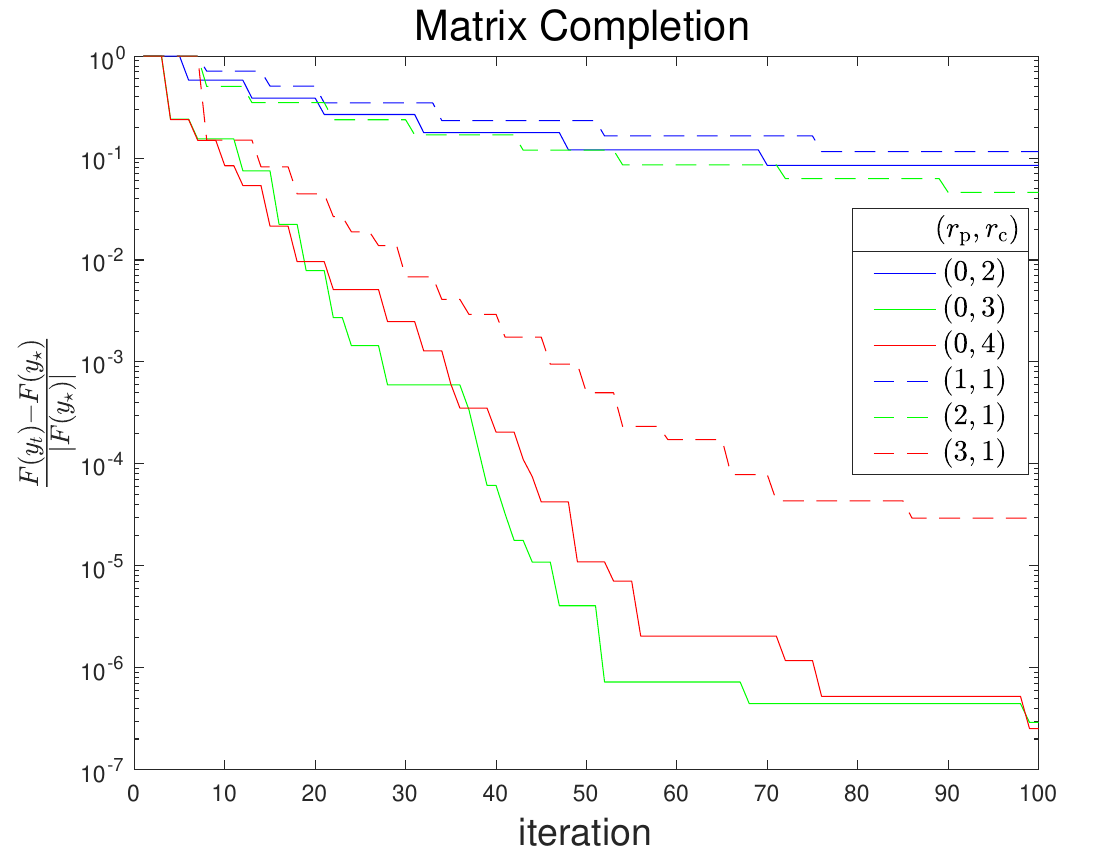}
		\caption{Matrix Completion}
		\label{fig:figure Matrix Completion}
	\end{subfigure}%
	\hspace{5pt}
	\begin{subfigure}[(b)]{.48\textwidth}
		\centering
		\includegraphics[width=1\linewidth, height= 0.25\textheight]{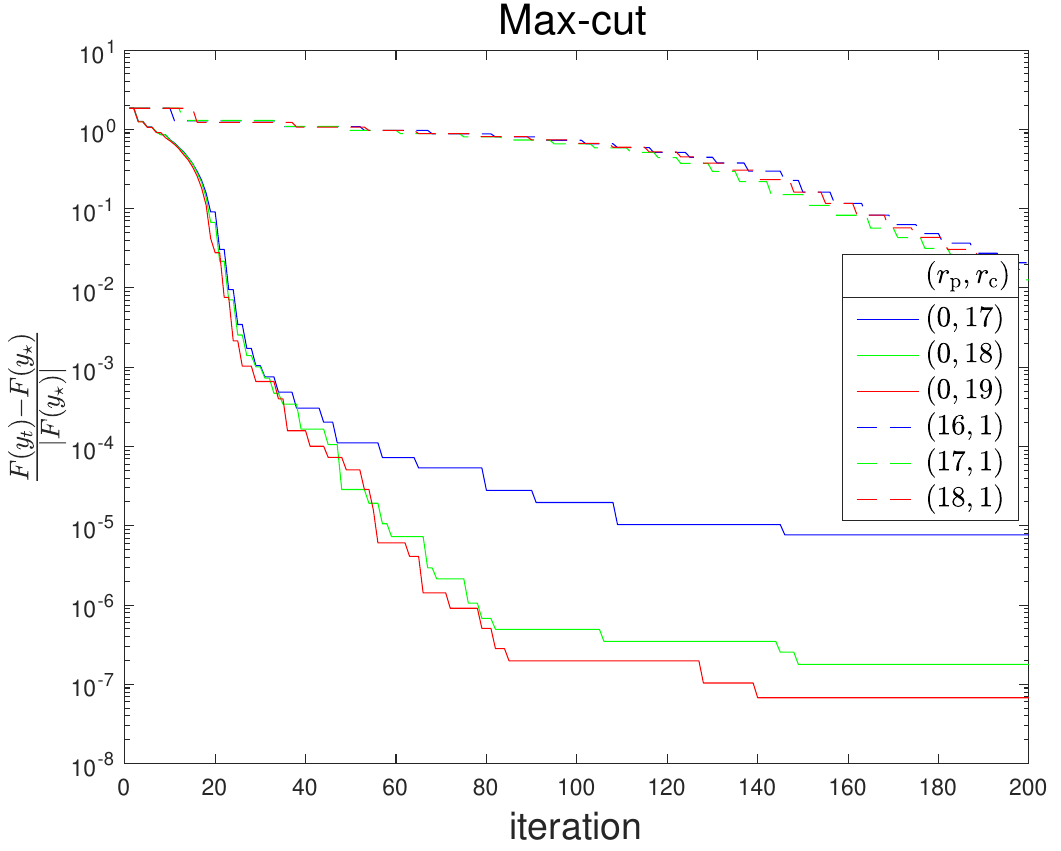}
		\caption{Max-cut}
		\label{fig:figure Max-cut}
	\end{subfigure}
	\caption{The evolution of the relative penalized dual objective value $\frac{F(y_t)-F(\ysol)}{|{F(\ysol)}|}$ for different configurations of the $(\rpast,\rcurrent)$ on problems of size $2000\times 2000$.}
	\label{fig: penalizedObjectiveValue}
\end{figure}

\subsection{Matrix Sketching for the Spectral Bundle Method} \label{subsec:sketching}
A particular bottleneck in solving large scale SDPs is storing the primal matrix $X_t$. Here we show how to avoid storing this iterate by introducing the matrix sketching idea developed in \cite{tropp2017fixed,tropp2017practical,yurtsever2017sketchy} and demonstrate its usage on the previous max-cut and matrix completion instances. Applying $(\rpast,\rcurrent)$-SpecBM with such a sketching procedure, we are able to solve a matrix completion SDPs with several billion decision variables (up to $(1.6\times 10^5)\times(1.6\times 10^5)$) in only a few minutes.

\newcontent{
We detail the matrix sketching procedure in Algorithm \ref{alg:sketch}.
%Let us introduce the matrix sketching idea introduced in \cite{tropp2017fixed,tropp2017practical,yurtsever2017sketchy}. Such procedure allows the spectral bundle method to omit storing the aggregation $\bar{X}_{t+1}= \eta^\star_t \bar{X}_t +V_tQ_2\Lambda_2Q_2^\top V_t^\top$ and the primal iterate $X_{t} = \eta^\star_{t}\bar{X}_{t}+V_{t} S^\star_{t} V_{t}^\top$ using $\bigO(n^2)$ storage. 
%
%
\begin{algorithm2e}[t]
  \SetAlgoNoLine
	\KwData{Iteration number $T$, dimension $n$, an integer $r>0$, the sequence of $V_tQ_2\Lambda_2 Q_2^\top V_t^\top,t=0,\dots,T$, and $V_{T+1},S_{T+1}^\star$ in Algorithm \ref{alg:spec-bundle}\;}
  {Sample $\Psi \in \real^{n\times R}$ with an $R\geq 3r+1$ and $\Psi_{ij}\overset{\text{iid}}{\sim} N(0,1)$\;} 
  {Initialize $\bar{Y}_0=0\in \real^{n\times R}$\;}
 \For{$t=0,\dots,T-1$}{
Update $\bar{Y}_{t+1} =  V_tQ_2\Lambda_2Q_2^\top (V_t^\top\Psi)+ \eta_t^\star  \bar{Y}_t $
 }
 {Compute $Y_{T} = V_{T}S^\star _{T}(V_{T}^\top\Psi)+ \eta_{T}^\star  \bar{Y}_{T}$\;}
 {Reconstruct $\hat{X}_T:= Y_T(\Psi^\top Y_T)^{\dagger}Y_T^\top$\;}
	\caption{Matrix sketching procedure}
	\label{alg:sketch}
\end{algorithm2e}
The method requires an integer $r>0$, which represents either an estimate of the true rank of the primal solution or the user's computational/storage budget for managing larger matrices. 

The algorithm first draws a random matrix $\Psi \in \real^ {n \times R}$ with i.i.d.~normal entries.
%:
%\begin{equation} \label{eq: randompsi}
%\Psi \in \real^ {n \times R}\quad \text{with}\quad R\geq  3r+1,\quad\text{and}\quad \Psi_{ij} \overset{\text{iid}}{\sim} N(0,1).
%\end{equation}
%Here  Using $\Psi$, we form the sketches $\bar{Y}_t$ and $Y_t$ of $\bar{X}_t$ and $X_t$ respectively via 
Denote the number of total iteration as $T$. The main idea of the method is that using $\Psi$, we can form a low rank sketch of $\bar X_t$, denoted by $\bar Y_t$, as
\begin{equation}\label{eqn: onetimeSketch}
%\bar{Y}_t = \bar{X}_t\Psi \in \real^{\dm \times R},\quad %\text{and}\quad 
\bar Y_t = \bar X_t\Psi \in \real^{\dm \times R}.
\end{equation}
%Provided $X_0=\bar X_0=0$, then we initialize $Y_0=\bar{Y}_0 =0$. To avoid computing $\bar{Y}_t$ and $Y_t$ directly via the formulas \eqref{eqn: onetimeSketch}, we update them gradually using the updating formula of $X_t$ and $\bar{X}_t$ (namely, 
Using the update formula $\bar{X}_{t+1} = \eta^\star_t \bar{X}_t +V_tQ_2\Lambda_2Q_2^\top V_t^\top$,
%and  $X_{t} = \eta^\star_t\bar{X}_t+V_t S^\star_t V_t^\top$ 
we can obtain $\bar Y_t$ as done in Algorithm \ref{alg:sketch} on Line 4. 
%
%\begin{align}
%\bar{Y}_{t+1} = V_tQ_2\Lambda_2Q_2^\top (V_t^\top\Psi)+ \eta_t^\star  \bar{Y}_t \in \real^{\dm \times R }, \label{eq:sketch-agg-update}\\
%Y_{t+1} = V_{t+1}S^\star _{t+1}(V_{t+1}^\top\Psi)+ \eta_{t+1}^\star  \bar{Y}_{t+1} \in \real^{\dm \times R }. \label{eq:sketch-primal-update1}
%\end{align}

%\newcontent{Note that we only need to maintain the iterate $\bar{Y}_t$ but not $Y_t$ while the spectral bundle method runs. The iterate $Y_t$ can be constructed when needed from $\bar{Y}_{t+1}$ and $V_{t+1}S^\star _{t+1} V_{t+1}^\top$. 
The retrieve the primal matrix $X_T$,
we form the sketch matrix $Y_t = X_t \Psi= \eta_t^\star \bar{Y}_t + V_{T}S^\star _{T}(V_{T}^\top\Psi)$ 
using the relationship that $X_{t} = \eta^\star_{t}\bar{X}_{t}+V_{t} S^\star_{t} V_{t}^\top$.
The matrix 
$X_t$ is then reconstructed using $Y_t$ via the last line of the algorithm where
%\begin{align}\label{eqn: reconstructionsketch}
%\hat{X}_t:= Y_t(\Psi^\top Y_t)^{\dagger}Y_t^\top.
%\end{align}
the notation $(\Psi^\top Y_T)^{\dagger}$ is the pseudo-inverse of $\Psi^\top Y_T$. Note that $\hat{X}_T$ is positive semidefinite since $X_T$ is and $\Psi^\top Y_T = \Psi^\top X_T\Psi_T$. A numerical stable implementation of the last line can be found in \cite[Algorithm 3]{tropp2017fixed}, which outputs $(U_T,\Lambda_T)$ such that $\hat{X}_T=U_T \Lambda_T U_T^\top$ with $U_T \in \real^{n\times R}$ having orthonormal columns and a nonnegative diagonal $\Lambda_t$. 
%(LD: I include one in the appendix \ref{sec: stable reconstruction}. Do we want to include it?)\todo{This stable construction seems nearly identical to the tropp2017 paper. Citing them should suffice.} 
Note that one can then store $\hat{X}_T$ via the factors $(U_T,\Lambda_T)$ rather than forming $\hat{X}_T$ explicitly.
%and their reconstruction only needs to be done whenever a primal matrix is needed instead of in every iteration.
From \cite[Theorem 4.1]{tropp2017fixed}, we have the following guarantee:
\begin{equation}\label{eq: sketchbound}
    \mathbb{E}_{\Psi}\|X_T - \hat{X}_T\|_*\leq \frac{4}{3}\|X_T -[X_T]_r\|_*,
\end{equation}
where $\|\cdot\|_*$ is the nuclear norm and $[\cdot]_r$ is the best rank $r$ approximation in terms of Frobenius norm. Hence, if $X_T$ is close to a low rank matrix $\xsol$, then so long as $r\geq \rsol$, the approximation error $X_T-\hat{X}_T$ is small. 

%Specifically, the best rank $r$ approximation of a matrix $Z$ is $U\Sigma V^\top $,
%where $U$ and $V$ are right and left singular vectors corresponding to the $r$
%largest singular values of $Z$ and $\Sigma$ is a diagonal matrix with $r$ largest singular values of $Z$. 
%In actual implementation, one
%only needs the factors $(QU, \Sigma, V)$ defining the last iterate $\hat{X}_T$ instead of reconstructing $\hat{X}_t$ in every iteration. We refer the reader to \cite[Theorem 5.1]{tropp2017practical} for the theoretical guarantees on the reconstruction matrix $\hat{X}_t$.

Thus $(\rpast,\rcurrent)$-SpecBM combined with the matrix sketching procedure can avoid \emph{forming new iterates} $\bar{X}_t$ and $X_t$, which each occupies $\bigO(n^2)$ storage.
As discussed in the end of Section \ref{sec: ImportantsubproblemSolver}, we know that we can solve 
the subproblem \eqref{eq: subproblemAMSpectralbundleMethod} by storing $d_t = \Amap \bar{X}_t$, $c_t = \inprod{C}{\bar{X}_t}$, and $h_t=\tr(\bar{X}_t)$ rather than computing them directly from $\bar{X}_t$. Hence,
$(\rpast,\rcurrent)$-SpecBM combined with the matrix sketching idea described above can report a nearly optimal, low rank $X_T$ while  only using storage of size }
\begin{equation}\label{eq: storagesizeMsketching}
    \bigO(\underbrace{nr}_{\text{storing $\Psi$, $\bar{Y}_t$, and $Y_t$}} +\underbrace{m}_{\text{storing $d_t, \,c_t$, and $h_t$}}).
\end{equation}
The quantity $\bigO(nr + m)$ can be significantly smaller than $\bigO(n^2)$ for 
applications of SDP \eqref{p} when the rank estimate $r$ is small (constant or logarithmic with respect to $n$) and $m$ is on the order of $n$, see \cite{ding2019optimal} and \cite{yurtsever2017sketchy} for further discussion of storage benefits.

%
%\newcontent{Another natural idea to store $X_t$ while avoiding $\bigO(n^2)$ storage is to store a factorization of it, e.g., eigenvalue decomposition. If the number of nonzero eigenvalues is small, then we can have much savings. However, as demonstrated in Appendix \ref{sec: plotsOfRank}, the rank of $X_t$ during the algorithmic process can be much larger than the rank of the solution $\rank(\xsol)$. Hence, storing $X_t$ in a factorization form during the initial and intermediate phase of the algorithm is not as beneficial as the matrix sketching procedure described above.
%\section{Plots of rank of $X_t$}\label{sec: plotsOfRank}
%

\subsubsection{Max-Cut and Matrix Completion Experiments with Sketching}
Continuing the previous experimental setup for max-cut and matrix completion, we demonstrate the usage of the matrix sketching procedure here.
First, in Figure \ref{fig: Xt rank}, we measure the numerical rank of $X_t$ (measured by the number of singular values larger than $10^{-2}$). 
We see that the intermediate rank of $X_t$ can be much larger than the primal optimal solution rank, $\rank(\xsol)$, even though we expect it will eventually converge to have rank equal to $\rank(\xsol)$.
This is a particularly relevant observation in justifying the use of sketching procedures as only a low-rank sketch of the primal solution matrix needs to be stored at any time. Alternative approaches, such as storing $X_t$ via a factorization (e.g., its eigenvalue decomposition), may still incur high storage costs due to the high rank of intermediate iterates. 
\footnote{
\newcontent{Careful readers might notice that for the case of Max-cut and SpecBM with $r_c =1$, the iterate is always low rank. A further investigation (not shown here) on the trace of $V_tS_t^\star V_t^\top$ and $\eta_t^\star \bar{X}_t$ shows that when $r_c=1$, $\eta_t^\star \bar{X}_t$  is negligible and $V_tS_t^\star V_t^\top$ dominates. This might be due to the design as we keep the important past spectral information explicitly in $V_tQ_1$ as in Algorithm \ref{alg:spec-bundle}. This observation might suggest to use $V_tS_t^\star V_t^\top$ alone to approximate $X_t$ rather than using the matrix sketching. However, the iterate $X_t$, in this case, does not produce a good approximation of $X_\star$.}
}
%\todo{Last sentence in this footnote confuses me.}

\begin{figure}[h]
	\begin{subfigure}[(a)]{.49\textwidth}
		\centering
		\includegraphics[width=1\linewidth, height= 0.25\textheight]{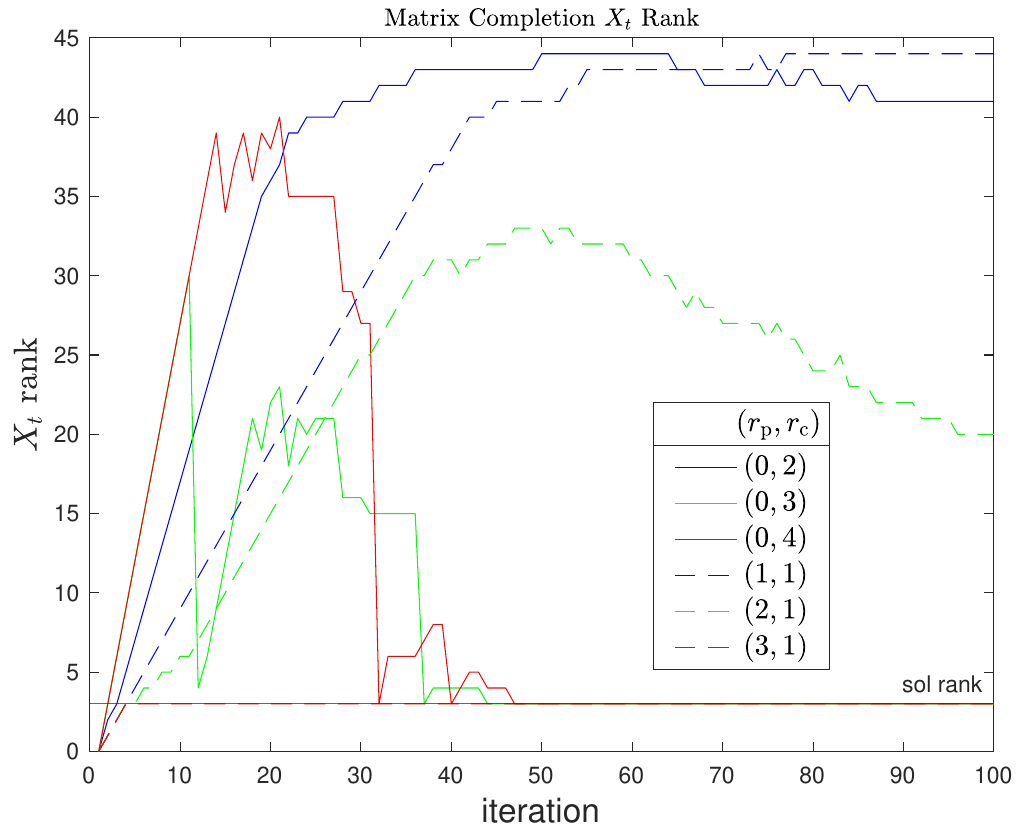}
		\caption{Matrix Completion where $\text{rank}(X_\star)=3$}
		\label{fig:figure Matrix Completion Xt rank}
	\end{subfigure}%
	\hspace{5pt}
	\begin{subfigure}[(b)]{.49\textwidth}
		\centering
		\includegraphics[width=1\linewidth, height= 0.25\textheight]{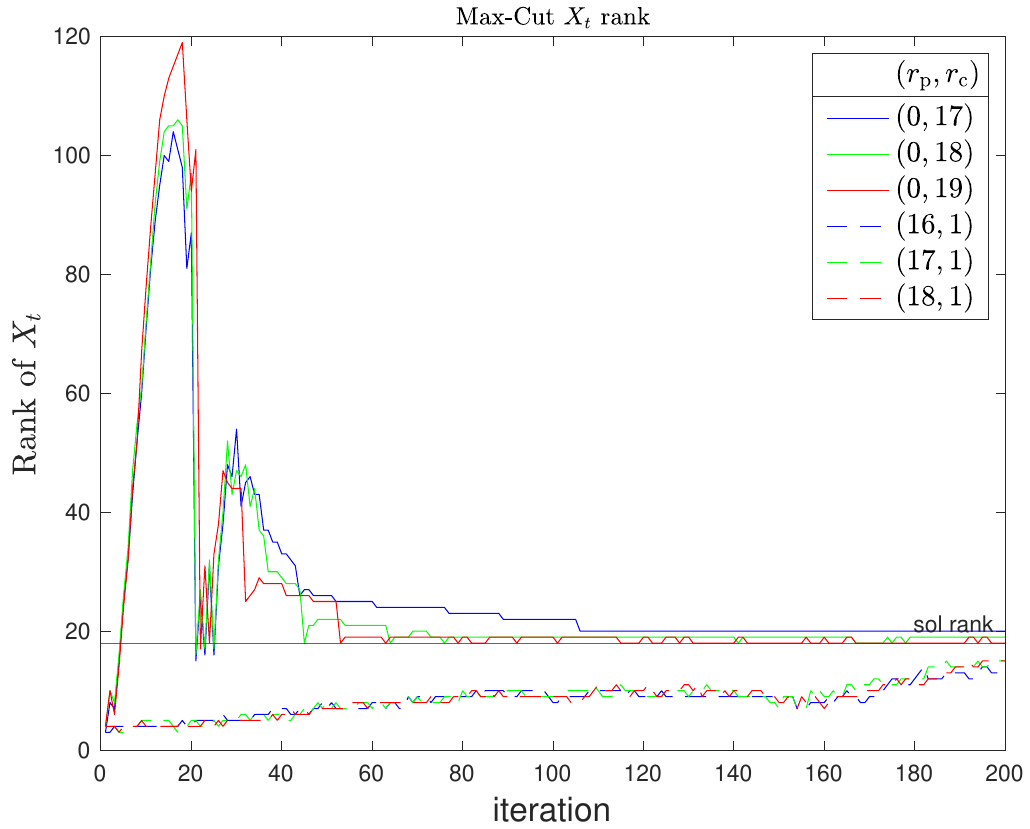}
		\caption{Max-cut where $\text{rank}(X_\star)=18$}
		\label{fig:figure Max-cut Xt rank}
	\end{subfigure}
	\caption{The evolution of the rank of the converging primal sequence $X_t$.}
	\label{fig: Xt rank}
\end{figure}
\begin{figure}[h]
	\begin{subfigure}[(a)]{.49\textwidth}
		\centering
		\includegraphics[width=1\linewidth, height= 0.25\textheight]{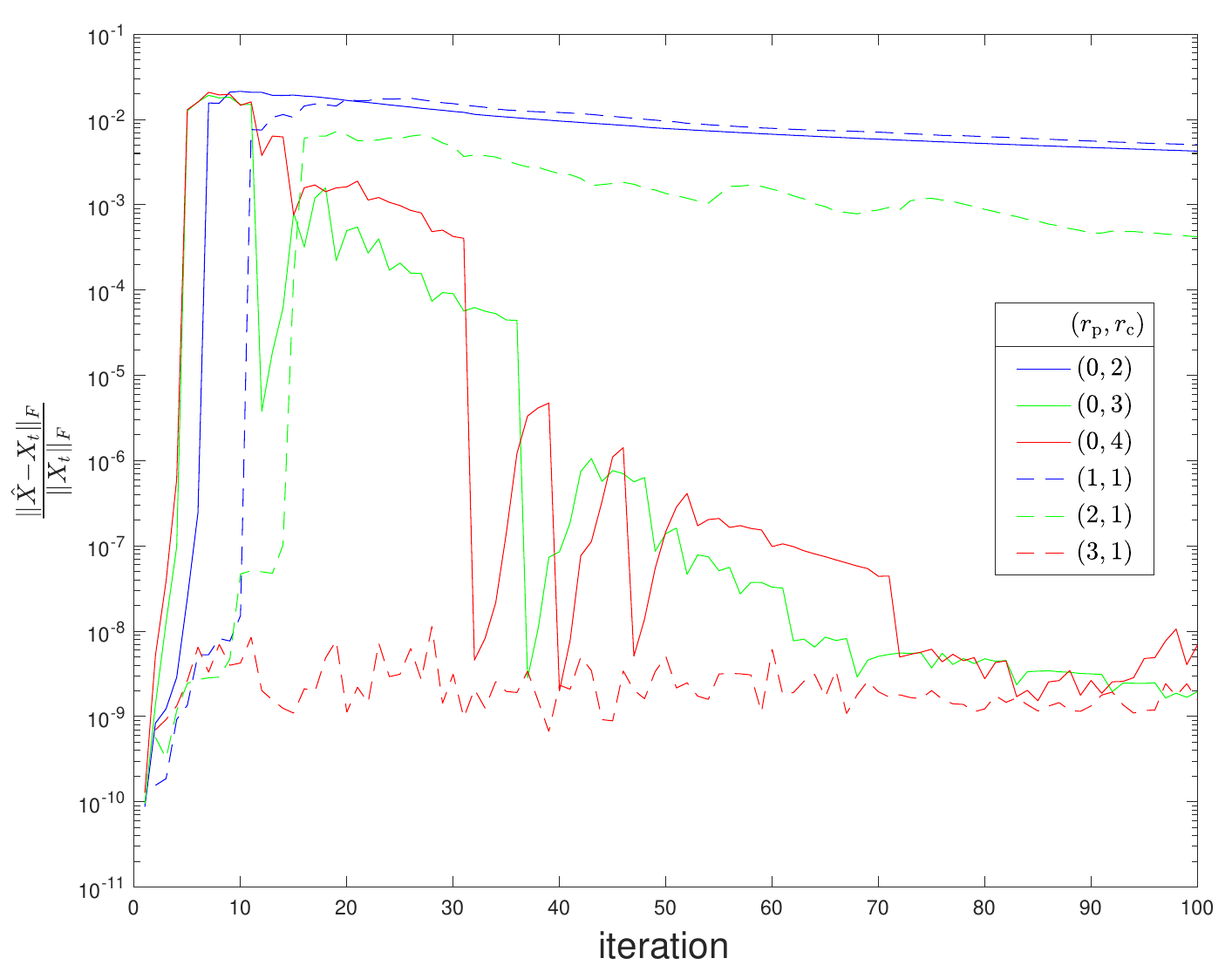}
		\caption{Matrix Completion where $\text{rank}(X_\star)=3$}
		\label{fig:figure Matrix Completion Xt diff}
	\end{subfigure}%
	\hspace{5pt}
	\begin{subfigure}[(b)]{.49\textwidth}
		\centering
		\includegraphics[width=1\linewidth, height= 0.25\textheight]{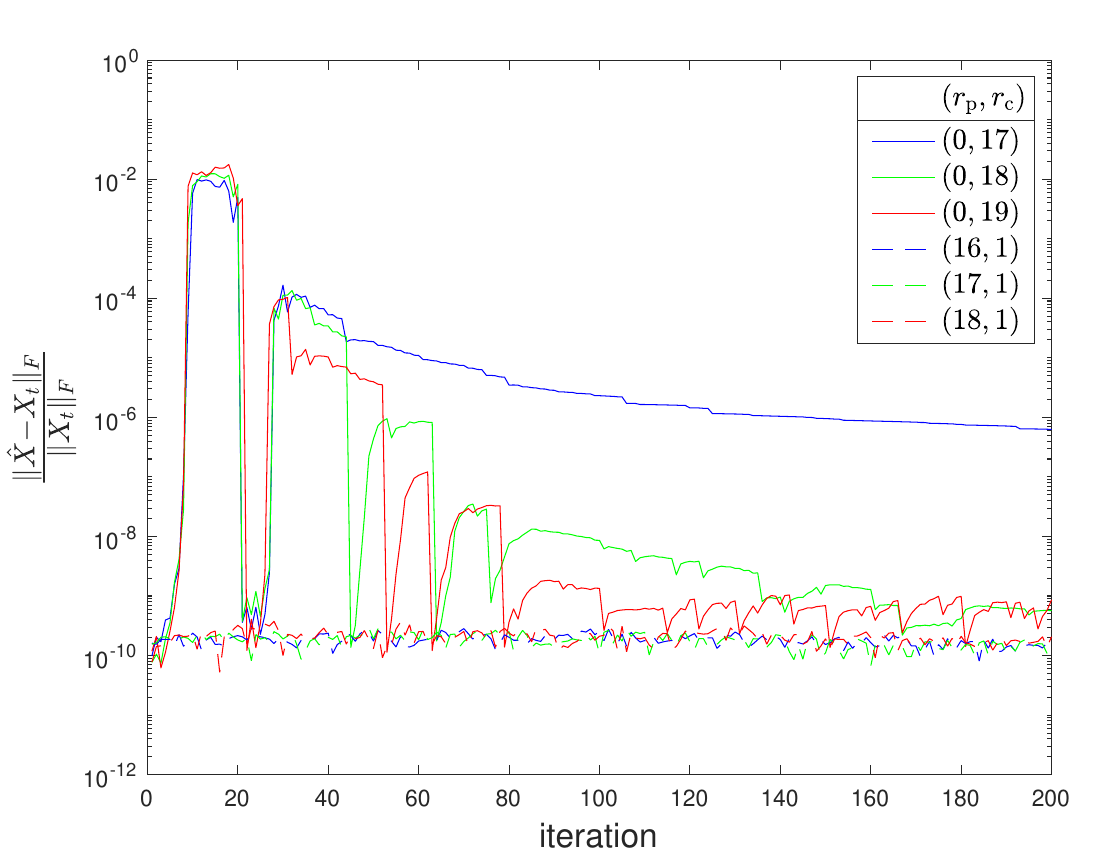}
		\caption{Max-cut where $\text{rank}(X_\star)=18$}
		\label{fig:figure Max-cut Xt diff}
	\end{subfigure}
	\caption{The evolution of the difference  $\frac{\fronorm{X_t-\hat{X}_t}}{\fronorm{X_t}}$ between the sketching primal sequence $\hat{X}_t$ and the primal sequence $X_t$.}
	\label{fig: Primal Sketching}
 %\vspace{-30pt}
\end{figure}

Next, we measure the relative difference between the reconstruction $\hat{X}_t$ and the true iterate $X_t$. \newcontent{That is, we reconstruct $X_t$ in each iteration $t$ by setting the input $T=t$ in Algorithm \ref{alg:sketch}.} We set $R=10$ for the case of matrix completion and $R=60$ for the case of max-cut. Note that this is only for demonstration, in practice, one does not need to reconstruct $X_t$ in every iteration but only reconstruct it whenever needed. Figure \ref{fig: Primal Sketching} shows the potential for a large difference $\frac{\fronorm{X_t-\hat{X}_t}}{\fronorm{X_t}}$ at intermediate iterates, but only a negligible difference once the method is converging compared to the level primal optimality and feasibility of $X_t$.

\subsubsection{Large-scale Matrix Completion Experiments} Finally, we demonstrate that $(\rpast,\rcurrent)$-SpecBM coupled with the matrix sketching procedure described above (Sketching $(\rpast,\rcurrent)$-SpecBM) is able to solve much larger problem instances. To illustrate this, we compare it with Mosek \cite{mosek2010mosek}, SDPT3 \cite{toh2006implementation}, SketchyCGAL \cite{yurtsever2021scalable}, and
$(\rpast,\rcurrent)$-SpecBM. The first two are matured general purpose solvers. The third is designed for solving large scale SDPs.

Due to the ease of computing the optimal solutions with bounded rank for simulated matrix completion problems, we focus on it. We follow the setting of matrix completion in Section \ref{sec: maxcutmatrixcompenumerics} but vary the dimension of $X^\natural$ and the observation probability $p = \frac{100}{n}$.  The resulting primal matrix size $n\times n$ of the SDP ranges from $n=200$ to $n = 1.6\times 10^{5}$. For $(\rpast,\rcurrent)$-SpecBM and Sketching $(\rpast,\rcurrent)$-SpecBM, we set $\rpast = 0$ and $\rcurrent =4$ based on the previous strong performance when $n=2000$. We allow each method to run for $400$ seconds with up to $8$GB of memory. Table~\ref{tab: comparison} displays the relative recovery error $\frac{\fronorm{\hat{X}-X^\natural}}{{\fronorm{X^\natural}}}$ of each method, where $\hat{X}$ is extracted from the left top $\frac{n}{2}\times \frac{n}{2}$ block of the returned solution. 

From  Table~\ref{tab: comparison}, we see that the matured solvers (Mosek and SDPT3) are able to reach very high accuracy $10^{-10}\sim 10^{-11}$ but do not scale past $n=1600$. SketchyCGAL successfully scales to tackle problem of size $n=160000$. However, we note the recovery error degrades as $n$ grows. (For $n\geq 80000$, this error is worse than the trivial estimator $\hat{X}=0$). We see that SpecBM achieves a moderate accuracy $10^{-4}\sim 10^{-5}$ whenever the method did not exhaust its time and memory budget. However, the method does not scale up for problems of size $n\geq 20000$ due to the cost of managing the $n\times n$ matrix $X_t$. 

Sketching SpecBM achieves the best balance between accuracy and time. It is able to solve the SDP problem of size $n=160000$ to moderate accuracy $10^{-4}$. In terms of storage (which is not report here), we also observe the memory usage of the sketching method scales linearly with the dimension $n$ while SpecBM scales quadratically. 

\begin{table}[h]
\begin{center}
{\footnotesize
\begin{tabular}{||c | c c c  c c  ||} 
\hline 
$n$ & Mosek & SDPT3 & SketchyCGAL & SpecBM & Sketching SpecBM \\ 
\hline \hline 
200 & 1.0200e-10 & 3.9900e-10 & 3.1400e-04 & 9.4800e-05 & 4.1100e-05\\ 
400 & 8.2000e-10 & 7.2800e-09 & 5.9800e-04 & 8.4500e-05 & 2.6800e-05\\ 
800 & 8.9400e-10 & 9.5000e-11$^\star$ & 9.0400e-04 & 9.8600e-05 & 1.4800e-05\\ 
1600 & 5.3700e-11$^\star$ & $\infty$  & 0.0013 & 1.9000e-04 & 9.1500e-05\\ 
3200 & $\infty$ & $\infty$  & 0.0020 & 6.3600e-05 & 1.5900e-05\\ 
5000 & $\infty$  & $\infty$ & 0.0042 & 1.1000e-04 & 4.3400e-05\\ 
10000 & $\infty$ & $\infty$  & 0.0073 & 1.5300e-04 & 9.5600e-05\\ 
20000 & $\infty$  & $\infty$  & 0.1503 & $\infty$ & 1.0700e-05\\ 40000 & $\infty$  & $\infty$  & 0.1640 & $\infty$ & 1.3100e-04\\ 80000 & $\infty$ & $\infty$ & 1.3523 & $\infty$  & 1.2700e-04\\ 160000 & $\infty$  &$\infty$ & 1.5652 & $\infty$ & 1.5200e-04\\ 
\hline 
\end{tabular}
}
\end{center}
    \caption{Comparison of different solvers for the matrix completion problem 
    in Section \ref{sec: maxcutmatrixcompenumerics}. The symbol $\infty$ notes failure to finish within $400$ seconds or requiring over 8GB of memory. Both entries with $^\star$ used more than $400$ seconds but less than three hours.}
    \label{tab: comparison}
\end{table}

	%{\color{red} \input{sketching}}
	\section{Discussion}\label{sec: discussion}
In this paper, we presented sublinear convergence rates for a family of spectral bundle methods and show the method speeds up to linear convergence with proper parameter choice and low-rank structural assumptions. We verify our theoretical results via numerical experiments and demonstrate their applicability to solving large-scale semidefinite programs. 

We conclude by presenting a few future directions, further building on the theoretical and practical effectiveness of spectral bundle methods:
\begin{itemize}
	\item \textbf{Handling more general constraints:} The problem format \eqref{p} only has equality and positive semidefiniteness constraints. Incorporating inequality 
	constraints and certain norm constraints such as $\norm{\Amap X-b}\leq \varepsilon$ for some $\varepsilon>0$ might be beneficial 
	for other semidefinite programming applications such as 
	stochastic block models with more than 2 blocks \cite{amini2018semidefinite} and noisy matrix completion \cite{candes2010matrix}. 
	It seems 
	straightforward to extend this work to these new settings by introducing additional dual variables or analyzing 
	new dual objectives.
	\item \textbf{Converging to the relative interior of the dual solution set $\ysolset$:} In Theorem \ref{thm: linear convergence of Block SBM under the extra condition strict complementarity},
	the rank estimate $\rcurrent$ needs to satisfy $\rcurrent\geq r_d$ instead of $\rcurrent\geq \rank(\xsol)$ assuming uniqueness of the primal solution. Though
	the quantity $r_d$ can be indeed larger than $\rank(\xsol)$ as shown in \cite[Theorem 5.1]{ding2020regularity}, $\rcurrent \geq \rank(\xsol)$ already ensures quick 
	convergence in our numerics. Based on the proof of 
	Theorem \ref{thm: linear convergence of Block SBM under the extra condition strict complementarity}, 
	this more general setting of linear convergence can be proved assuming the method converges to a dual solution that is in the relative interior of $\ysolset$. This is indeed what we observed by examining the dual slack matrices experimentally. Of course, this 
	cannot be guaranteed by the current algorithm design. Hence we pose the question of whether an algorithm can maintain our low per iteration complexity while always converging to the relative interior of the optimal solution set.
	\item \textbf{Adaptive choice of $\rho$ and $(\rpast,\rcurrent)$:} Our analysis assumes the choice of $\rho$ and $(\rpast,\rcurrent)$ is constant. Is it possible to analyze adaptively setting $\rho$ and $(\rpast,\rcurrent)$? An adaptive rule of $\rcurrent$ is of great practical importance as the prior information about the primal solution rank may not be available to the user. In \cite[eq. (40) and Remark 4]{oustry2000second}, two adaptive rules of $\rcurrent$ have been proposed. These rules might be combined with an upper bound on $\rcurrent$ to ensure the per iteration computation complexity does not explode. An adaptive choice of $\rpast$ may not be of critical importance given the existence of the aggregation, though we may simply use the adaptive rule of $\rcurrent$ for $\rpast$. Adaptive rules for updating $\rho$ have been considered in~\cite{DG2020}. 
 We leave theoretical and numerical investigations of these rules into future work.
	\item \newcontent{\textbf{Matrix sketching or not:} We require an external procedure matrix sketching to enhance the scalability of SpecBM. For the special case $(3,1)$-SpecBM for matrix completion, such an external procedure is not needed as $V_tS_t^\star V_t^\top$ approximates $X_\star$ well (not shown here). Further analysis and numerical investigation on this direction, especially combined with adaptive rank choice, is interesting and may reveal that SpecBM is self-sufficient for scalability.}
	\item \textbf{Incorporating second-order information:} In the work of \cite{helmberg2014spectral}, the idea of incorporating second-order information 
	with low rank approximation (a version of block eigenvectors) has been explored and the algorithm, CB-diag, appears 
	to achieve $10^{-4}$ precision faster than the spectral bundle method with $r_c=1$ \cite[Section 7]{helmberg2014spectral}. Yet no convergence theory has been given for this method. Is it possible to adapt some of the proof techniques here to provide faster convergence guarantees for CB-diag?
\end{itemize}

	\section*{Acknowledgments.} We would like to thank insightful discussions with Michael L. Overton, Adrian Lewis, James Renegar, Yudong Chen, Madeleine Udell, and Zhenan Fan. We would also like to thank the editor and two anonymous referees for their constructive comments.

\bibliographystyle{alpha}
\bibliography{references}	
	\appendix
\section{A historical remark on $(\rpast,\rcurrent)$-SpecBM}\label{sec: historicalRemark} 
The algorithm presented in Helmberg and Rendle's paper \cite[Algorithm 4.1]{helmberg2000spectral} requires $\rcurrent =1$ and allows the parameter $\rpast $ to vary by the user. The requirement on $\rcurrent$ might be due to the fact that the authors want to avoid guessing the correct multiplicity of the optimal solution as done in previous works such as \cite{cullum1975minimization,polak1982nondifferentiable,overton1992large} and the use of entire spectrum as done in \cite{overton1992large,oustry2000second}, since requiring SpecBM with $\rcurrent =1$ is enough for their method to converge. Nevertheless, in their implementation, on  \cite[page 690]{helmberg2000spectral}, it is mentioned that ``$P^k$ may be enriched with additional Lanczos-vectors
from the eigenvalue computation''. In our notation, this means that we allow $\rcurrent>1$. This is made more clear in the book chapter \cite[page 330]{helmberg2000bundle}, ``...add $n_A$ Lanczos vectors corresponding to the largest eigenvalues of $T_i$''. In our notations, this means set $\rcurrent = n_A >1$. A more systematic approach to the spectral bundle method using past and current eigenvectors can also be found in \cite[Section 3.4.2 and Section 3.4.3]{lemarechal2000nonsmooth}, where multiple eigenvectors of the current $Z(z_t)$ are computed\footnote{Note the way of dealing with past eigenvectors is different from the approach used in this paper.}.

A subtle difference between the method in \cite{lemarechal2000nonsmooth,oustry2000second} and the one presented in this paper is that the algorithmic parameter (chosen at each iteration) in \cite{lemarechal2000nonsmooth,oustry2000second} is $\epsilon$ rather than $\rcurrent$. The quantity $\epsilon$ is a  quantity associated with the $\epsilon$-enlargement of the largest eigenvalue: given a symmetric matrix $A$, its $\epsilon$-enlargement is defined as
\[
\Lambda_\epsilon :\,=\{\lambda_i\mid \lambda_i(A)\geq \lambda_1(A)-\epsilon \}.
\]
Accordingly, the $\rcurrent$ in \cite[eq. (3.21)]{lemarechal2000nonsmooth} is defined to be  
\[
\text{the cardinality of } \Lambda_\epsilon.
\]
We do not take this approach as our motivating applications in Section \ref{sec: discussion On low-rankness} have natural upper bounds on the solution rank which can be used to set $\rcurrent$ (even without knowledge of such bounds, we still guarantee the method converges, albeit sublinearly, for any $\rcurrent\geq 1$). Regardless, considering adaptive choice in model construction is an important practical direction as the upper bound information may not be available.

One primary reason for the use of an $\epsilon$-enlargement in \cite{lemarechal2000nonsmooth,oustry2000second} is its connection to Markovian dual bundle methods \cite[Chap.  XIII]{hiriart2013convex}, which utilizes the so-called $\epsilon$-subdifferential \cite[Chap. XI]{hiriart2013convex}. The $\epsilon$-enlargement can be used as an inner approximation of the $\epsilon$-subdifferential utilizing the structure of the largest eigenvalue function
\cite[eq. (12)]{oustry2000second}. However, as discussed in \cite[Beginning of Sec. 3.3]{lemarechal2000nonsmooth}, determining a good rule for selecting $\epsilon$ is hard as it has a bivalent role. In the case of our results on linear speedups, we would require $\epsilon$ to be a half or a constant fraction of the eigengap $\delta$ for the negative dual optimal slack matrix $-\Amap^*\ysol +C$, defined in \eqref{eq: definitionOfdelta}, so that when the iterate is near the solution with distance comparable to the eigengap, we can identify the rank. However, determining the eigengap a priori is even harder than an upper estimate of the rank of the primal solution for the applications described in Section \ref{sec: discussion On low-rankness}.
\section{Projecting to a scaled $\mathcal{S}_t$}\label{sec: projecttoS_t}
Recall the spectral bundle method needs to solve the subproblem
\[
\min_{(\eta,S)\in\mathcal{S}_t} f_t(\eta,S),
\]
where 
\begin{equation} 
\begin{aligned} 
f_t(\eta,S) :\;=& \inprod{b}{y_t}  +\inprod{\eta \bar{X}_t 
+V_tSV_t^\top}{C-\Amap^*y_t} \\
&+\frac{1}{2\rho_t}\twonorm{b-\Amap
	\left( \eta \bar{X}_t+V_tSV_t^\top \right)}^2, \\ 
\mathcal{S}_t  := &\{{S\succeq 0,\;\eta \geq 0,\;\tr(S)+\alpha \eta \leq \alpha}\}.
\end{aligned} 
\end{equation}
After rescaling this constraint set, we may consider
the constraint set as 
\[ 
\tilde{\mathcal{S}}= \{S\in \symMat_+^k,\, \eta\geq 0,\, \tr(S)+\eta \leq 1\},
\]
and a new objective $\tilde{f}_t(\eta,S) = f(\eta,\alpha S)$.

Below we detail how to project any $(\eta_0,S_0)\in \RR\times \symMat^{\bar{r}}$ on to the set $\tilde{\mathcal{S}}$, yielding some $(\eta^\star,S^\star)$. This can be done by diagonalizing and projecting onto a simplex as follows:
\begin{enumerate}
	\item Compute the eigenvalue decomposition of $S_0 = V\Lambda_0 V^\top$,
	where $\Lambda_0\in \symMat^{\bar{r}}$ is a diagonal matrix with diagonal $\vec{\lambda}_0=( \lambda_1,\dots,\lambda_{\bar{r}})$.
	\item Compute $(\eta^\star, \vec{\lambda}^\star) = \arg\min_{\eta+\sum_{i=1}^{\bar{r}}\lambda_i \leq 1,\;\eta \geq 0,\;\lambda_i\geq 0}\twonorm{(\eta_0,\vec{\lambda}_0)-(\eta,\vec{\lambda})}$.
	\item Form $S^\star = V\diag(\vec{\lambda^\star})V^\top$.
	Here $\diag(\lambda)$ forms a diagonal matrix with the vector $\lambda$ on the diagonal.
	% and leaves other places of the matrix $0$.ß
\end{enumerate}
The main computational cost is the eigenvalue decomposition which requires $\bigO(\bar{r}^3)$ time. The second 
step requires projection to the convex hull of probability simplex and the origin, which can be done 
in $\bigO(\bar{r}\log \bar{r})$ time \cite{wang2013projection}. The correctness of this procedure
can be verified as in \cite[Lemma 3.1]{allen2017linear}
and \cite[Lemma 6]{garber2019convergence}.
\section{Relationship between 
Lemma \ref{lem: linearlyConvergent} and  \cite{drusvyatskiy2018error,drusvyatskiy2019efficiency}}\label{sec: relationshipDimaAdrian}
The two papers \cite{drusvyatskiy2018error,drusvyatskiy2019efficiency} study the prox-linear method, and the concept of a quadratically accurate model is not explicitly mentioned. However, a combination of proofs there can be employed to establish Lemma \ref{lem: linearlyConvergent} assuming a descent step is taken $y_{t+1} = z_{t+1}$ (which we handle in \eqref{eq: descentStepASBMLocalLinearConvergence}).

Specifically, based on a quadratically accurate model, we obtain 
inequalities  \eqref{eq: approximationModelStep1} (the second inequality) and \eqref{eq: approximationModelStep2} (with a transformation based on the law of cosine). And these two correspond to 
\cite[Eq. (3.2) and (3.3)]{drusvyatskiy2018error}. The authors of 
\cite{drusvyatskiy2018error} then prove the linear convergence based on an error bound condition \cite[Definition 3.1]{drusvyatskiy2018error} that is specific to the prox-linear method. To use the proof there for the bundle method, we define the error bound condition as  $\dist(y_t,\ysolset)\leq \beta_0 \twonorm{y_{t+1}-y_t}$ where $y_{t+1}= \arg\min_z \bar{F}_t(z) + \frac{\rho}{2}\twonorm{z-y_t}^2$ for some $\beta_0>0$. For objective with quadratic growth, the error bound condition for prox-linear method can be proved using \cite[Corollary 3.6]{drusvyatskiy2018error}
which is based on a step-lengths comparison inequality \cite[Inequality (3.11)]{drusvyatskiy2018error}. To use the proof in \cite[Corollary 3.6]{drusvyatskiy2018error}, we need to define an appropriate notion of  step-lengths comparison inequality and prove it holds under the quadratically accurate model. We define the comparison inequality in the context of bundle methods as 
$\twonorm{y_{t+1}-y_t}\geq \beta_1\twonorm{\hat{y}_t-y_t}$ where $\hat{y} = \arg\min_z F(z) + \frac{\rho}{2}\twonorm{z-y_t}^2$.
This inequality can be proved based on \cite[the proof of Theorem 4.5]{drusvyatskiy2019efficiency} which only uses that the given model $\bar{F}_t$ is a quadratically accurate model. 
\end{document}